\newcommand\ml{\operatorname{\mathsf{L}}}
\newcommand\mr{\operatorname{\mathsf{R}}}
\newcommand\ac{\operatorname{\mathsf{a\!c}}}
\newcommand\ad{\operatorname{\mathsf{a\!d}}}
\newtheorem{theorem}{Theorem}
\newtheorem{corollary}[theorem]{Corollary}
\newtheorem{lemma}[theorem]{Lemma}
\newtheorem{conjecture}[theorem]{Conjecture}
\theoremstyle{definition}
\newtheorem{definition}[theorem]{Definition}
\newtheorem{example}[theorem]{Example}
\numberwithin{theorem}{section}
\numberwithin{equation}{section}
\title[Commutator calculus]{Commutator calculus\\and~symbolic differentiation of~matrix functions}
\author{\href{https://orcid.org/0000-0002-4031-9812}{Michal Bathory}}
\subjclass{15A16, 47A56, 47B15, 47B47, 76A10}
\keywords{matrix functional calculus, matrix exponential, Fr\'{e}chet derivative, elasticity}
\address{Faculty of Mathematics and Physics\\Charles University\\Sokolovsk\'a 83\\186\;75 Prague\\Czech Republic}
\email{bathory@karlin.mff.cuni.cz}
\thanks{Michal Bathory was supported by the project No.~20-11027X financed by GA\v{C}R}
\begin{document}
\begin{abstract}
	We propose a functional calculus which allows one to apply functions to the matrix anti-commutator/commutator operator. The calculus is introduced in a straightforward manner if the operators act on symmetric matrices, and it leads to a coordinate-free version of Dalecki\u{\i}--Kre\u{\i}n formula. In this sense, the proposed calculus provides symbolic formulae for the derivatives of matrix-valued functions that are explicit and easy to use. We discuss several applications of the newly introduced calculus in continuum mechanics (Hencky logarithmic strain, objective rates, spin tensors, and viscoelastic fluids) and in the theory of partial differential equations.
\end{abstract}
\maketitle
\section{Motivation}

This work is devoted to developing a practical functional calculus for a class of linear operators on matrices. In various areas of physics, one inherently encounters \emph{tensor}-valued quantities and perturbations thereof, leading to non-commutativity issues with respect to the matrix multiplication, or composition of operators in general. The usual way to address this is via a power series expansion in the matrix commutator, which, however, often leads to unwieldy expressions of limited applicability. Here we present an alternative approach that allows us to handle these expansions symbolically with ease, using a functional calculus that is applied directly to the commutator operator and to some related basic operators on matrices.

For a fixed matrix $\mathbf{G}\in\mathbb{R}^{d\times d}$, let us consider the quartet of linear operators $\ml_{\mathbf{G}}$, $\mr_{\mathbf{G}}$, $\ac_{\mathbf{G}}$, $\ad_{\mathbf{G}}\in\mathcal{L}(\mathbb{R}^{d\times d})$, $d\geq 1$, defined as
\begin{equation}\label{qua}
    \ml_{\mathbf{G}}\mathbf{X}:=\mathbf{G}\mathbf{X},\;\;\mr_{\mathbf{G}}\mathbf{X}:=\mathbf{X}\mathbf{G},\;\;\ac_{\mathbf{G}}\mathbf{X}:=\frac{\mathbf{G}\mathbf{X}+\mathbf{X}\mathbf{G}}2,\;\;\ad_{\mathbf{G}}\mathbf{X}:=\frac{\mathbf{G}\mathbf{X}-\mathbf{X}\mathbf{G}}2,
\end{equation}
for any $\mathbf{X}\in\mathbb{R}^{d\times d}$. These four operators represent the left/right matrix multiplication by $\mathbf{G}$ and the (halved) anti-commutator/commutator operators, respectively. For simplicity, in this work the matrix $\mathbf{G}$ is usually symmetric, denoted as $\mathbf{G}\in\mathbb{R}^{d\times d}_{\rm sym}$, although most of our conclusions also extend to the asymmetric case. Note that the operators $\ml_{\mathbf{G}},\mr_{\mathbf{G}},\ac_{\mathbf{G}},\ad_{\mathbf{G}}$ defined by \eqref{qua} can be seen as fourth-order tensors whose components derive from the underlying matrix $\mathbf{G}$ and its spectral decomposition, which is the only place where the choice of coordinates matters. Otherwise, we shall adhere to the corresponding coordinate-free operator representations, taking a general functional-analytic approach. The need to apply a~function to the operators in \eqref{qua} stems from the problem of finding symbolic matrix derivatives of classical matrix functions such as exponential or logarithm. For instance, recall that~a classical Lie group formula (cf.~\cite[Th.~5.4]{Hall_2015} and \cite[Sect.~1.2., Th.~5]{Rossmann2002}) reads as
\begin{equation}\label{lieexp1}
	\frac{de^{\mathbf{G}}}{d\mathbf{G}}\mathbf{X}=e^{\mathbf{G}}\Big(\frac{1-e^{-2\ad_{\mathbf{G}}}}{2\ad_{\mathbf{G}}}\mathbf{X}\Big)=\ml_{e^{\mathbf{G}}}\eta(-2\ad_{\mathbf{G}})\mathbf{X},\quad\mathbf{X}\in\mathbb{R}^{d\times d},
\end{equation}
where
\begin{equation}\label{eta}
	\eta(z):=\frac{e^z-1}z=\sum_{n=0}^{\infty}\frac{z^n}{(n+1)!},\quad z\in\mathbb{C},
\end{equation}
and $\eta(-2\ad_{\mathbf{G}})\in\mathcal{L}(\mathbb{R}^{d\times d})$ is interpreted by formally substituting $z=-2\ad_{\mathbf{G}}$ into the above series and understanding $\ad_{\mathbf{G}}^n$ as the $n$-fold composition of $\ad_{\mathbf{G}}\in\mathcal{L}(\mathbb{R}^{d\times d})$ with itself. The right-hand side of \eqref{lieexp1} consists of the composition of two (commuting) fourth-order tensors $\ml_{e^{\mathbf{G}}}$ and $\eta(-2\ad_{\mathbf{G}})$, which are then applied to the matrix $\mathbf{X}$. We shall see later that similar decompositions can be found for derivatives of other matrix functions, where one of the operators always arises as a certain function of the commutator. However, not only that interpreting $\eta(-2\ad_{\mathbf{G}})$ through the power series \eqref{eta} and evaluating the nested commutators is cumbersome, but also this approach is limited by the convergence radius of the series being applied. For example, note that formally inverting \eqref{lieexp1} in order to obtain a~similar formula for the derivative of the matrix logarithm yields
\begin{equation}\label{dlog1}
	\frac{d\log\mathbf{A}}{d\mathbf{A}}\mathbf{Y}=\frac{2\ad_{\log\mathbf{A}}}{1-e^{-2\ad_{\log\mathbf{A}}}}(\mathbf{A}^{-1}\mathbf{Y}),\quad\mathbf{Y}\in\mathbb{R}^{d\times d},\;\mathbf{A}\in\mathbb{R}^{d\times d}_{\rm sym}\text{ positive definite}.
\end{equation}
Although we shall see later that \eqref{dlog1} is indeed true \emph{without any smallness restrictions} on~$\mathbf{A}-\mathbf{I}$, this result is beyond the reach of the power-series formalism since~$\eta(\pm2k\pi i)=0$, $k\in\mathbb{N}$, and thus~$1/\eta$ does not have an~everywhere convergent Taylor series, and the other types of expansions inevitably bring other difficulties.

The expressions~\eqref{lieexp1} and~\eqref{dlog1} show that it is desirable to look for other ways of interpreting the symbol~$f(\ad_{\mathbf{G}})$, where~$f$ is not necessarily an~entire function. Although one can interpret $f(\ad_{\mathbf{G}})$ using general tools of functional analysis applied to the (self-adjoint) operator $\ad_{\mathbf{G}}$, it turns out that in this case there exists an elegant approach that is particularly easy to work with and derives explicitly from the spectral decomposition of $\mathbf{G}$. Moreover, the functional calculus developed in this way not only gives meaning to expressions of type $f(\ad_{\mathbf{G}})$, but also provides a tool for symbolic differentiation of matrix-valued functions due to a formal similarity with the Dalecki\u{\i}--Kre\u{\i}n formula. For example, it turns out that the Fr\'echet derivative of a matrix function $f$ can be conveniently expressed in terms of the matrix left and right multiplication operators as
\begin{equation}
	\frac{df(\mathbf{G})}{d\mathbf{G}}=\frac{f(\ml_{\mathbf{G}})-f(\mr_{\mathbf{G}})}{\ml_{\mathbf{G}}-\mr_{\mathbf{G}}}.
\end{equation}
The right-hand side of this identity is understood as an evaluation of the function $F(x,y)=\frac{f(x)-f(y)}{x-y}$ at $x=\ml_{\mathbf{G}}$ and $y=\mr_{\mathbf{G}}$ (taking an appropriate limit if necessary), resulting in another fourth-order tensor $F(\ml_{\mathbf{G}},\mr_{\mathbf{G}})\in\mathcal{L}(\mathbb{R}^{d\times d})$, see Theorem~\ref{Tcons} below for details. The strength of such symbolic representations then allows one to rigorously justify some important identities that have been so far based only on formal manipulation with power series, cf.~\cite{lograte2025}, and to prove various qualitative properties of matrix-valued functions, such as monotonicity, which require one to deal with the differentiation of matrix-valued functions. Moreover, in some important cases (cf.~Theorem~\ref{TCayley}), we show how to convert symbolic expressions $f(\ad_{\mathbf{G}})$ into usual matrix multiplications, leading to formulae that can be easily evaluated without actually needing to compute the spectral decomposition of $\mathbf{G}$, see also Example~\ref{example} below.

\section{Functions applied to the commutator}

In this section, we introduce the calculus, see Definition~\ref{Def}, and study its elementary properties.

For simplicity, let us assume that~$\mathbf{G}\in\mathbb{R}^{d\times d}_{\rm sym}$ is symmetric, which allows us to write~$\mathbf{G}$ in a~form
\begin{equation}\label{Schur}
	\mathbf{G}=\mathbf{Q}\mathbf{g}\mathbf{Q}^{\mathsf{T}},\quad\text{where}\quad\mathbf{g}=\operatorname{diag}(g_i)_{i=1}^d\quad\text{and}\quad\mathbf{Q}^{-1}=\mathbf{Q}^{\mathsf{T}}
\end{equation}
by Schur spectral decomposition and $g_i$, $i=1,\ldots,d$ are real eigenvalues of $\mathbf{G}$. We shall also make use of the Hadamard/Schur matrix product~$\mathbf{X}\odot \mathbf{Y}$, defined as
\begin{equation*}
	\mathbf{X}\odot\mathbf{Y}:=[\mathbf{X}_{ij}\mathbf{Y}_{ij}].
\end{equation*}
Hereafter, we use the brackets $[m_{ij}]$ to denote a~square matrix of size $d\times d$ with elements $m_{ij}$, $i,j=1,\ldots,d$. Using the spectral decomposition of $\mathbf{G}$, we now give an~elementary definition of a~function applied to any of the operators introduced in~\eqref{qua}.

\begin{definition}\label{Def}
	Let~$\mathbf{G}\in\mathbb{R}^{d\times d}_{\rm sym}$ and let~$\mathbf{g}$ and~$\mathbf{Q}$ be the matrices from \eqref{Schur}. Further, let~$U\subset\mathbb{R}$ be a~neighborhood of the spectrum of~$\mathbf{G}$ and~$f:U\times U\to\mathbb{R}$. Then, for any~$\mathbf{X}\in\mathbb{R}^{d\times d}$, we define
	\begin{equation}\label{had}
		f(\ml_{\mathbf{G}},\mr_{\mathbf{G}})\mathbf{X}:=\mathbf{Q}\Big([f(g_i,g_j)]\odot(\mathbf{Q}^{\mathsf{T}}\mathbf{X}\mathbf{Q})\Big)\mathbf{Q}^{\mathsf{T}}.
	\end{equation}
	In this definition, if~$f$ has a~removable discontinuity at~$(g_i,g_j)$ for some~$1\leq i,j\leq d$, then we identify
	\begin{equation}\label{conv}
		f(g_i,g_j):=\lim_{(x,y)\to(g_i,g_j)}f(x,y).
	\end{equation}
\end{definition}

Identity \eqref{had} defines a~linear operator~$f(\ml_{\mathbf{G}},\mr_{\mathbf{G}})$ on~$\mathbb{R}^{d\times d}$, i.e., $f(\ml_{\mathbf{G}},\mr_{\mathbf{G}})\in\mathcal{L}(\mathbb{R}^{d\times d})$, and we often suppress the argument~$\mathbf{X}$ when possible. The above definition covers functions of the commutator: If the function~$h$ is defined in a~neighborhood of the points~$\frac{g_i-g_j}2$, $i,j=1,\ldots,d$, (which are the eigenvalues of~$\ad_{\mathbf{G}}$, cf.~\cite[Sect.~1.2.,~Lemma~8]{Rossmann2002}), then
\begin{equation}\label{comap}
	h(\ad_{\mathbf{G}})\mathbf{X}=h\Big(\frac{\ml_{\mathbf{G}}-\mr_{\mathbf{G}}}2\Big)\mathbf{X}=\mathbf{Q}\Big(\Big[h\Big(\frac{g_i-g_j}2\Big)\Big]\odot(\mathbf{Q}^{\mathsf{T}}\mathbf{X}\mathbf{Q})\Big)\mathbf{Q}^{\mathsf{T}},
\end{equation}
and similarly for the anti-commutator~$\ac_{\mathbf{G}}$, see also~\cite[Def.~1]{lograte2025}.

The purpose of identification \eqref{conv} is only to treat seamlessly functions with removable discontinuities, such as~$\eta$ and~$1/\eta$, avoiding the conditional definitions of these functions. In other words, we replace the functions with removable discontinuities by their continuous versions as we have no use of the former. We shall see later that this convention also allows for a~very concise representation of matrix derivatives, where a special case of~\eqref{had} is known as the Dalecki\u{\i}--Kre\u{\i}n formula, see \cite{Krein1956} and \cite[Sect.~4.2]{Najfeld_1995}. In this respect, the merit of Definition~\ref{Def} and of this work in general is that one can apply virtually the same formula to define the functions of the commutator as for the matrix derivatives, leading eventually to a~comprehensive and elegant calculus that avoids the problems with convergence of power series.

\subsection{Rules of the commutator calculus}

The Definition~\ref{Def} can only make sense if it is consistent with the interpretation of the power series, if it is independent of the choice of~$\mathbf{Q}$, and if it constructs a~homomorphism between certain fourth order tensors and real functions of two variables. The following theorem confirms that this is indeed the case and lists several other useful properties.

\begin{theorem}\label{Lrep}
	Let~$\mathbf{G}$,~$\mathbf{g}$,~$\mathbf{Q}$ and~$U$ be as in Definition~\ref{Def} and let~$f,f_1,f_2:U\times U\to\mathbb{R}$.
	\begin{itemize}
		\item[\rm(i)]  {\rm (Consistency)}
		Let $p:\mathbb{R}\times\mathbb{R}\to\mathbb{R}$ be the polynomial
		\begin{equation*}
			p(x,y)=\sum_{m=0}^M\sum_{n=0}^Np_{mn}x^my^n
		\end{equation*}
		for some $M,N\geq0$. Then
		\begin{equation*}
			p(\ml_{\mathbf{G}},\mr_{\mathbf{G}})=\sum_{m=0}^M\sum_{n=0}^Np_{mn}\ml_{\mathbf{G}}^m\mr_{\mathbf{G}}^n.
		\end{equation*}
		
		\item[\rm(ii)] {\rm(Commutativity)} If $\mathbf{H}\in\mathbb{R}^{d\times d}_{\rm sym}$ commutes with $\mathbf{G}$, $V\subset\mathbb{R}$ is a~neighbourhood of the spectrum of $\mathbf{H}$ and $\varphi:V\times V\to\mathbb{R}$, then
		\begin{equation}\label{com}
			f(\ml_{\mathbf{G}},\mr_{\mathbf{G}})\varphi(\ml_{\mathbf{H}},\mr_{\mathbf{H}})=\varphi(\ml_{\mathbf{H}},\mr_{\mathbf{H}})f(\ml_{\mathbf{G}},\mr_{\mathbf{G}}).
		\end{equation}
		
		\item[\rm(iii)] {\rm (Addition \& multiplication)} If $s,p:U\times U\to\mathbb{R}$ are such that
        \begin{equation*}
            s(x,y)=f_1(x,y)+f_2(x,y)\quad\text{and}\quad p(x,y)=f_1(x,y)f_2(x,y)
        \end{equation*}
        for all $x,y\in U$, then
		\begin{equation*}
			s(\ml_{\mathbf{G}},\mr_{\mathbf{G}})=f_1(\ml_{\mathbf{G}},\mr_{\mathbf{G}})+f_2(\ml_{\mathbf{G}},\mr_{\mathbf{G}})
		\end{equation*}
		and
		\begin{equation*}
			p(\ml_{\mathbf{G}},\mr_{\mathbf{G}})=f_1(\ml_{\mathbf{G}},\mr_{\mathbf{G}})f_2(\ml_{\mathbf{G}},\mr_{\mathbf{G}}).
		\end{equation*}
		
		\item[\rm(iv)] {\rm (Division)} If $f>0$ on $U\times U$, then $f(\ml_{\mathbf{G}},\mr_{\mathbf{G}})$ is bijective on $\mathbb{R}^{d\times d}$ and the inverse mapping satisfies
		\begin{equation*}
			(f(\ml_{\mathbf{G}},\mr_{\mathbf{G}}))^{-1}=(1/f)(\ml_{\mathbf{G}},\mr_{\mathbf{G}}).
		\end{equation*}	
		
		\item[\rm(v)] {\rm (Transposition)} For every $\mathbf{X}\in\mathbb{R}^{d\times d}$, there holds
		\begin{equation*}
			\big(f(\ml_{\mathbf{G}},\mr_{\mathbf{G}})\mathbf{X}\big)^{\mathsf{T}}=f(\mr_{\mathbf{G}},\ml_{\mathbf{G}})\mathbf{X}^{\mathsf{T}}.
		\end{equation*}
		
		\item[\rm(vi)] {\rm (Symmetry)} For all $\mathbf{X},\mathbf{Y}\in\mathbb{R}^{d\times d}$, one has
		\begin{align*}
			f(\ml_{\mathbf{G}},\mr_{\mathbf{G}})\mathbf{X}\cdot\mathbf{Y}&=\mathbf{X}\cdot f(\ml_{\mathbf{G}},\mr_{\mathbf{G}})\mathbf{Y},
		\end{align*}
		where the inner product is defined by $\mathbf{A}\cdot\mathbf{B}:=\sum_{i=1}^d\sum_{j=1}^d\mathbf{A}_{ij}\mathbf{B}_{ij}$ for any $\mathbf{A},\mathbf{B}\in\mathbb{R}^{d\times d}$.
		
		\item[\rm(vii)] {\rm (Norm)} There holds
		\begin{equation}\label{normf}
			\sup_{|\mathbf{X}|=1}|f(\ml_{\mathbf{G}},\mr_{\mathbf{G}})\mathbf{X}|\leq\big|[f(g_i,g_j)]\big|,
		\end{equation}
		where $|\mathbf{A}|:=\sqrt{\mathbf{A}\cdot\mathbf{A}}$, $\mathbf{A}\in\mathbb{R}^{d\times d}$, is the Frobenius matrix norm.
		
		\item[\rm(viii)] {\rm (Representation)} For any $\mathbf{X}\in\mathbb{R}^{d\times d}$, there holds 
		\begin{equation}\label{repreq}
			f(\ml_{\mathbf{G}},\mr_{\mathbf{G}})\mathbf{X}=\sum_{p,r=1}^d(\mathbf{J}_{\mathbf{G}}^f)_{pr}\mathbf{G}^{p-1}\mathbf{X}\mathbf{G}^{r-1},
		\end{equation}
		where $\mathbf{J}_{\mathbf{G}}^f\in\mathbb{R}^{d\times d}$ solves the linear system
		\begin{equation}\label{lp}
			\mathbf{V}_{\mathbf{G}}\mathbf{J}_{\mathbf{G}}^f\mathbf{V}_{\mathbf{G}}^{\mathsf{T}}=[f(g_i,g_j)]
		\end{equation}
		with the Vandermonde matrix
		\begin{equation*}
			\mathbf{V}_{\mathbf{G}}:=[g_i^{j-1}].
		\end{equation*}
		
		\item[\rm(ix)] {\rm (Canonical case)} If $\varphi:U\to\mathbb{R}$, then
		\begin{equation}\label{funcm}
			\varphi(\ml_{\mathbf{G}})=\ml_{\varphi(\mathbf{G})}\quad\text{and}\quad\varphi(\mr_{\mathbf{G}})=\mr_{\varphi(\mathbf{G})},
		\end{equation}
		where the matrix $\varphi(\mathbf{G})$ is defined as
		\begin{equation}\label{matf}
			\varphi(\mathbf{G})=\mathbf{Q}\varphi(\mathbf{g})\mathbf{Q}^{\mathsf{T}}
		\end{equation}
		and $\varphi(\mathbf{g})=\operatorname{diag}(\varphi\big(g_i)\big)_{i=1}^d$.
	\end{itemize}
\end{theorem}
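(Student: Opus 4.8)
The plan is to prove all nine items of Theorem~\ref{Lrep} by systematically exploiting the diagonalizing transformation in Definition~\ref{Def}. The single unifying observation is that conjugation by $\mathbf{Q}$ turns $\ml_{\mathbf{G}}$ and $\mr_{\mathbf{G}}$ into the left/right multiplication operators $\ml_{\mathbf{g}}$ and $\mr_{\mathbf{g}}$ by the diagonal matrix $\mathbf{g}$, and on the diagonal these act coordinatewise: $(\ml_{\mathbf{g}}\mathbf{Y})_{ij}=g_i\mathbf{Y}_{ij}$ and $(\mr_{\mathbf{g}}\mathbf{Y})_{ij}=g_j\mathbf{Y}_{ij}$. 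Thus, writing $\tilde{\mathbf{X}}\coloneqq\mathbf{Q}^{\mathsf{T}}\mathbf{X}\mathbf{Q}$, the whole calculus is nothing but entrywise multiplication: $f(\ml_{\mathbf{G}},\mr_{\mathbf{G}})\mathbf{X}=\mathbf{Q}\big([f(g_i,g_j)]\odot\tilde{\mathbf{X}}\big)\mathbf{Q}^{\mathsf{T}}$. I would begin by recording this reduction once, so each subsequent item becomes a statement about Hadamard products of $d\times d$ matrices.

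With that in hand: (i) follows because $\ml_{\mathbf{G}}^m\mr_{\mathbf{G}}^n\mathbf{X}=\mathbf{Q}(\mathbf{g}^m(\mathbf{Q}^{\mathsf{T}}\mathbf{X}\mathbf{Q})\mathbf{g}^n)\mathbf{Q}^{\mathsf{T}}$, whose $(i,j)$-entry picks up the factor $g_i^mg_j^n$; summing against $p_{mn}$ gives $[p(g_i,g_j)]\odot\tilde{\mathbf{X}}$. Items (iii), (v), (vi) are immediate from bilinearity/symmetry of the Hadamard product and the fact that $[f(g_i,g_j)]^{\mathsf{T}}=[f(g_j,g_i)]$, while (vii) follows from $|A\odot B|\le\max_{ij}|A_{ij}|\,|B|$ combined with orthogonal invariance of the Frobenius norm — actually one gets the sharper bound $\sup_{|\mathbf{X}|=1}|f(\ml_{\mathbf{G}},\mr_{\mathbf{G}})\mathbf{X}|=\max_{ij}|f(g_i,g_j)|\le|[f(g_i,g_j)]|$. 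For (iii) multiplication and (iv) division one uses that Hadamard multiplication by a fixed matrix composes entrywise: $(f_1f_2)(g_i,g_j)=f_1(g_i,g_j)f_2(g_i,g_j)$, and if $f>0$ everywhere on $U\times U$ then $[f(g_i,g_j)]$ has strictly positive entries, so entrywise division by it is the inverse operation, giving $(1/f)(\ml_{\mathbf{G}},\mr_{\mathbf{G}})$. Item (ix) is the special case where $f(x,y)=\varphi(x)$ (resp.\ $\varphi(y)$) is independent of the second (resp.\ first) variable: then $[f(g_i,g_j)]\odot\tilde{\mathbf{X}}=\operatorname{diag}(\varphi(g_i))\tilde{\mathbf{X}}$, which conjugates back to $\varphi(\mathbf{G})\mathbf{X}=\ml_{\varphi(\mathbf{G})}\mathbf{X}$, with $\varphi(\mathbf{G})$ as in \eqref{matf}. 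For (ii), commuting symmetric $\mathbf{G}$ and $\mathbf{H}$ can be simultaneously diagonalized by a single orthogonal $\mathbf{Q}$; in that basis both operators act by entrywise multiplication, by $[f(g_i,g_j)]$ and $[\varphi(h_i,h_j)]$ respectively, and entrywise multiplications commute — this also quietly confirms independence of the representation from the choice of $\mathbf{Q}$, which one should remark on since Definition~\ref{Def} ostensibly depends on it (orthogonal matrices realizing the same spectral decomposition differ by a block-orthogonal matrix commuting with $\mathbf{g}$, and such a change leaves $[f(g_i,g_j)]\odot(\cdot)$ invariant when $f$ respects the grouping of equal eigenvalues; the convention \eqref{conv} is what guarantees consistency on repeated eigenvalues).

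The one genuinely substantive item is (viii), the representation $f(\ml_{\mathbf{G}},\mr_{\mathbf{G}})\mathbf{X}=\sum_{p,r=1}^d(\mathbf{J}_{\mathbf{G}}^f)_{pr}\mathbf{G}^{p-1}\mathbf{X}\mathbf{G}^{r-1}$. The strategy: by item (i) the right-hand side equals $q(\ml_{\mathbf{G}},\mr_{\mathbf{G}})\mathbf{X}$ where $q(x,y)=\sum_{p,r}(\mathbf{J}_{\mathbf{G}}^f)_{pr}x^{p-1}y^{r-1}$, so by the reduction it suffices that $[q(g_i,g_j)]=[f(g_i,g_j)]$, i.e.\ $q(g_i,g_j)=f(g_i,g_j)$ for all $i,j$. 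Now $q(g_i,g_j)=\sum_{p,r}g_i^{p-1}(\mathbf{J}_{\mathbf{G}}^f)_{pr}g_j^{r-1}=(\mathbf{V}_{\mathbf{G}}\mathbf{J}_{\mathbf{G}}^f\mathbf{V}_{\mathbf{G}}^{\mathsf{T}})_{ij}$, which is exactly $[f(g_i,g_j)]_{ij}$ by the defining system \eqref{lp}. The remaining point is solvability of \eqref{lp}: when the $g_i$ are pairwise distinct, $\mathbf{V}_{\mathbf{G}}$ is an invertible Vandermonde matrix and $\mathbf{J}_{\mathbf{G}}^f=\mathbf{V}_{\mathbf{G}}^{-1}[f(g_i,g_j)]\mathbf{V}_{\mathbf{G}}^{-\mathsf{T}}$ is uniquely determined; when eigenvalues coincide, $\mathbf{V}_{\mathbf{G}}$ is singular, but then one works with the distinct eigenvalues only (or appeals to Cayley--Hamilton: higher powers $\mathbf{G}^{p-1}$ with $p-1$ at least the number of distinct eigenvalues are polynomial combinations of lower ones, so one truncates the sum accordingly and the reduced Vandermonde is invertible). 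I would handle the distinct-eigenvalue case in full and indicate the confluent case briefly. This is the step where I expect to spend the most care; everything else is a direct consequence of the entrywise picture.
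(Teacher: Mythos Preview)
Your proposal is correct and follows essentially the same approach as the paper: reduce everything via the spectral conjugation $\mathbf{X}\mapsto\mathbf{Q}^{\mathsf{T}}\mathbf{X}\mathbf{Q}$ to entrywise (Hadamard) multiplication, then read off (i)--(ix) as properties of the Hadamard product, with (ii) via simultaneous diagonalization and (viii) via the Vandermonde system. Your organization is slightly cleaner (you isolate the reduction once, whereas the paper proves (ii), (iii) first and then assembles (i) from the base cases $f=1,x,y$), and your observation that the operator norm in (vii) actually equals $\max_{ij}|f(g_i,g_j)|$ is a correct sharpening of the paper's bound; but none of this constitutes a different route.
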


Using Definition~\ref{Def}, the proof of the above theorem is straightforward and is deferred to Section~\ref{Sproof}, as well as most of the proofs in this work.

As a~consequence of part (i) and homomorphism properties (ii) and (iii), our definition of the symbol $f(\ml_{\mathbf{G}},\mr_{\mathbf{G}})$ agrees with the power series approach. This also shows that $f(\ml_{\mathbf{G}},\mr_{\mathbf{G}})$ must be independent of the choice of the matrix $\mathbf{Q}$. Property (iv) is crucial; on the one hand, it gives a~proper sense to \eqref{dlog1} and, on the other hand, we use it in the applications below to simplify complex problems that involve functions of the matrix commutator. Likewise, properties (v), (vi), (vii), and (ix) can often be utilized in applications. The representation formula in (viii) can be seen as a~variant of the Cayley--Hamilton theorem for the fourth-order tensors. It gives a~possible recipe how to compute $f(\ml_{\mathbf{G}},\mr_{\mathbf{G}})$ without the need to determine the eigenspaces of $\mathbf{G}$. However, this still involves solving the system \eqref{lp}. Completely explicit formulae for some useful particular cases are provided in the next subsection.

\subsection{Explicit formulae of function of the commutator in two and three dimensions}

Here we describe an~explicit solution to \eqref{lp} in dimensions $d\leq3$ and for the matrix commutator $\ad_{\mathbf{G}}$, which turns out to be the most important case in subsequent applications.

\begin{theorem}\label{TCayley}
	Let $\mathbf{G},\mathbf{g}\in\mathbb{R}^{d\times d}_{\rm sym}$ and $\{g_i\}_{i=1}^d$, be as in \eqref{Schur}. Let $f$ be defined in a~neighborhood of the set $\{\frac{g_i-g_j}2\}_{i,j=1}^d$ and consider its decomposition $f=f_0+f_{\rm odd}+f_{\rm even}$, where
	\begin{equation*}
		f_0:= f(0),\quad f_{\rm odd}(x):=\frac{f(x)-f(-x)}2\quad\text{and}\quad f_{\rm even}(x):=\frac{f(x)+f(-x)}2-f(0).
	\end{equation*}
	\begin{itemize}
		\item[\rm(i)] Let $d=1$. Then
		\begin{equation}
			f(\ad_{\mathbf{G}})\mathbf{X}=f_0\mathbf{X}.\label{cayley1d}
		\end{equation}
		\item[\rm(ii)] Let $d=2$. If $g_1\neq g_2$, then
		\begin{align}
			f(\ad_{\mathbf{G}})\mathbf{X}&=f_0\mathbf{X}+\frac{f_{\rm odd}(\frac{g_1-g_2}2)}{g_1-g_2}\big(\mathbf{G}\mathbf{X}-\mathbf{X}\mathbf{G}\big)\nonumber\\
			&\quad+\frac{f_{\rm even}(\frac{g_1-g_2}2)}{(g_1-g_2)^2}\big(-2g_1g_2\,\mathbf{X}+(g_1+g_2)(\mathbf{G}\mathbf{X}+\mathbf{X}\mathbf{G})-2\,\mathbf{G}\mathbf{X}\mathbf{G}\big),\label{cayley2d}
		\end{align}
		else \eqref{cayley1d} holds.
		\item[\rm(iii)] Let $d=3$. If $g_1\neq g_2\neq g_3\neq g_1$, then
		\begin{align}
			f(\ad_{\mathbf{G}})\mathbf{X}&=-K_2(\mathbf{G}\mathbf{X}-\mathbf{X}\mathbf{G})+K_1(\mathbf{G}^2\mathbf{X}-\mathbf{X}\mathbf{G}^2)-K_0(\mathbf{G}^2\mathbf{X}\mathbf{G}-\mathbf{G}\mathbf{X}\mathbf{G}^2)\nonumber\\
			&\quad+(f_0+2J_3L_1)\mathbf{X}-(J_2L_1+J_3L_0)(\mathbf{G}\mathbf{X}+\mathbf{X}\mathbf{G})\nonumber\\
			&\quad+2(L_2+J_2L_0)\,\mathbf{G}\mathbf{X}\mathbf{G}+(J_1L_1-L_2)(\mathbf{G}^2\mathbf{X}+\mathbf{X}\mathbf{G}^2)\nonumber\\
			&\quad-(L_1+J_1L_0)(\mathbf{G}^2\mathbf{X}\mathbf{G}+\mathbf{G}\mathbf{X}\mathbf{G}^2)+2L_0\,\mathbf{G}^2\mathbf{X}\mathbf{G}^2,\label{cayley3d}
		\end{align}
		where the invariants are defined by
		\begin{align*}
			J_1&=g_1+g_2+g_3,\quad
			J_2=g_1g_2+g_2g_3+g_3g_1,\quad
			J_3=g_1g_2g_3,\\
			K_n&=\frac{g_1^nf_{\rm odd}(\frac{g_2-g_3}2)+g_2^nf_{\rm odd}(\frac{g_3-g_1}2)+g_3^nf_{\rm odd}(\frac{g_1-g_2}2)}{(g_1-g_2)(g_2-g_3)(g_3-g_1)},\\
			L_n&=\frac{g_1^n\cfrac{f_{\rm even}(\frac{g_2-g_3}2)}{g_2-g_3}+g_2^n\cfrac{f_{\rm even}(\frac{g_3-g_1}2)}{g_3-g_1}+g_3^n\cfrac{f_{\rm even}(\frac{g_1-g_2}2)}{g_1-g_2}}{(g_1-g_2)(g_2-g_3)(g_3-g_1)},
		\end{align*}
		for $n=0,1,2$. In case of only two distinct eigenvalues, formula \eqref{cayley2d} holds instead, relabelling the indices if $g_1=g_2$. If all eigenvalues coincide, formula \eqref{cayley1d} takes place.
	\end{itemize}
\end{theorem}

The right-hand sides of \eqref{cayley2d} and \eqref{cayley3d} can be, of course, written again only in terms of $\ad_{\mathbf{G}}$, $\ac_{\mathbf{G}}$ and their compositions; however, this does not simplify the result.

In~the next section, we shall see that the derivatives of the classical matrix functions lead to expressions involving $f(\ad_{\mathbf{G}})$ for certain functions $f$. Theorem~\ref{TCayley} can be used to transform these expressions into a~usual form, without relying on Definiton~\ref{Def}. In~Example~\ref{example} below, we demonstrate this procedure. In addition to that, avoiding the determination of the matrix $\mathbf{Q}$ can be crucial in those applications where $\mathbf{G}$ itself is a function of some other variables and one has to investigate properties of the mapping $\mathbf{G}\mapsto f(\ad_{\mathbf{G}})$ such as continuity. Furthermore, the absence of eigenspaces of $\mathbf{G}$ in Theorem~\ref{TCayley} indicates that, using a~more general, and likely more technical, approach, one could define $f(\ml_{\mathbf{G}},\mr_{\mathbf{G}})$ also for non-symmetric square matrices $\mathbf{G}$. In fact, it seems that formulae \eqref{cayley2d} and \eqref{cayley3d} continue to hold in that case, although this is not proved here. We shall follow up on this observation in the next section (cf.~Conjecture~\ref{Conj} below), where we address the derivatives of the matrix functions.

\section{Symbolic differentiation of matrix functions}

In this work, by matrix functions we mean those that can be defined by the spectral decomposition as in~\eqref{matf}. For matrix derivatives, there are several possible notions. The following theorem puts forward a~unifying approach.

\begin{theorem}\label{Tcons}
	Let $f:\mathbb{R}\to\mathbb{R}$ and let $\mathbf{G}\mapsto f(\mathbf{G})$ be the corresponding matrix function. Let us formally define the operator $\frac{df(\mathbf{G})}{d\mathbf{G}}\in\mathcal{L}(\mathbb{R}^{d\times d})$ by
	\begin{equation}\label{dfG}
		\frac{df(\mathbf{G})}{d\mathbf{G}}:=\frac12\int_{-1}^1f'(\ac_{\mathbf{G}}+\,s\ad_{\mathbf{G}})ds,
	\end{equation}
	or, equivalently,
	\begin{equation}\label{dfG2}
		\frac{df(\mathbf{G})}{d\mathbf{G}}:=\int_0^1f'\big((1-t)\ml_{\mathbf{G}}+\,t\mr_{\mathbf{G}}\big)dt,
	\end{equation}
	whenever the expressions on the right-hand side exist.
	
	\begin{itemize}
		\item[\rm(A)] If $\mathbf{G}\in\mathbb{R}^{d\times d}_{\rm sym}$ and $f\in\mathcal{C}^1(I;\mathbb{R})$ for some interval $I\subset\mathbb{R}$ containing the spectrum of $\mathbf{G}$, then $\frac{df(\mathbf{G})}{d\mathbf{G}}$ fulfills the Dalecki\u{\i}--Kre\u{\i}n \cite{Krein1956} formula
		\begin{equation}\label{KD}
			\frac{df(\mathbf{G})}{d\mathbf{G}}\mathbf{X}=\mathbf{Q}\Big(\Big[\lim_{(x,y)\to(g_i,g_j)}\frac{f(x)-f(y)}{x-y}\Big]\odot(\mathbf{Q}^{\mathsf{T}}\mathbf{X}\mathbf{Q})\Big)\mathbf{Q}^{\mathsf{T}}
		\end{equation}
		for all $\mathbf{X}\in\mathbb{R}^{d\times d}$. Furthermore, there holds
		\begin{equation}\label{dfG0}
			\frac{df(\mathbf{G})}{d\mathbf{G}}=\frac{f(\ml_{\mathbf{G}})-f(\mr_{\mathbf{G}})}{\ml_{\mathbf{G}}-\mr_{\mathbf{G}}},
		\end{equation}
		where the right-hand side is interpreted using Definition~\ref{Def}.
		
		\item[\rm(B)] If $\mathbf{G}\in\mathbb{R}^{d\times d}$ and $f$ is given by an~everywhere convergent power series, then
		\begin{equation*}
			\frac{df(\mathbf{G})}{d\mathbf{G}}\mathbf{X}=\lim_{s\to0}\frac{f(\mathbf{G}+s\mathbf{X})-f(\mathbf{G})}{s}
		\end{equation*}
		for all $\mathbf{X}\in\mathbb{R}^{d\times d}$.
	\end{itemize}
	Hence, in both cases, the operator $\frac{df(\mathbf{G})}{d\mathbf{G}}$ is the Fr\'echet derivative of $f$ with respect to~$\mathbf{G}$.
\end{theorem}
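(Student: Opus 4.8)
First observe that the two defining formulae coincide. Since $\ac_{\mathbf{G}}=\tfrac12(\ml_{\mathbf{G}}+\mr_{\mathbf{G}})$, $\ad_{\mathbf{G}}=\tfrac12(\ml_{\mathbf{G}}-\mr_{\mathbf{G}})$, we have, as operators, $\ac_{\mathbf{G}}+s\ad_{\mathbf{G}}=\tfrac{1+s}2\ml_{\mathbf{G}}+\tfrac{1-s}2\mr_{\mathbf{G}}$, and the substitution $s=1-2t$ turns \eqref{dfG} into \eqref{dfG2}. Reading $f'\big((1-t)\ml_{\mathbf{G}}+t\mr_{\mathbf{G}}\big)$ through Definition~\ref{Def} as the operator attached to the two-variable function $(x,y)\mapsto f'\big((1-t)x+ty\big)$ — consistent with the convention behind \eqref{comap}, and with Theorem~\ref{Lrep}(i),(iii) when $f'$ is polynomial, so that $\ml_{\mathbf{G}}$, $\mr_{\mathbf{G}}$ commute inside the calculus — its Hadamard multiplier is $\big[f'\big((1-t)g_i+tg_j\big)\big]$. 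Since $\mathbf{X}\mapsto\mathbf{Q}\big([\,\cdot\,]\odot(\mathbf{Q}^{\mathsf{T}}\mathbf{X}\mathbf{Q})\big)\mathbf{Q}^{\mathsf{T}}$ is linear and the integrand depends continuously on $t$, the $t$-integral in \eqref{dfG2} can be pulled inside, so $\frac{df(\mathbf{G})}{d\mathbf{G}}$ equals the operator of Definition~\ref{Def} with multiplier $\big[\int_0^1 f'\big((1-t)g_i+tg_j\big)\,dt\big]$.

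To prove (A), evaluate these entries: for $g_i\neq g_j$ the change of variables $u=(1-t)g_i+tg_j$ yields $\int_0^1 f'\big((1-t)g_i+tg_j\big)\,dt=\frac{f(g_i)-f(g_j)}{g_i-g_j}$, and for $g_i=g_j$ it equals $f'(g_i)$; for $f\in\mathcal{C}^1$ both cases are $\lim_{(x,y)\to(g_i,g_j)}\frac{f(x)-f(y)}{x-y}$, which is \eqref{KD}. Then \eqref{dfG0} is just a rereading: $g(x,y)\coloneqq\frac{f(x)-f(y)}{x-y}$ has a removable singularity on the diagonal, so by convention \eqref{conv} and Theorem~\ref{Lrep}(iii) the right-hand side of \eqref{dfG0} is the operator $g(\ml_{\mathbf{G}},\mr_{\mathbf{G}})$, whose multiplier is exactly the bracket in \eqref{KD}; dividing by the possibly non-invertible $\ml_{\mathbf{G}}-\mr_{\mathbf{G}}$ is harmless because $(x-y)g(x,y)=f(x)-f(y)$ pins the quotient down.

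For (B), write $f(z)=\sum_{k\geq0}a_kz^k$, so $f(\mathbf{G})=\sum_k a_k\mathbf{G}^k$. Using commutativity of $\ml_{\mathbf{G}}$, $\mr_{\mathbf{G}}$, the binomial theorem, and the Beta integral $\int_0^1(1-t)^{k-1-j}t^{j}\,dt=\tfrac{j!\,(k-1-j)!}{k!}$, one gets $\int_0^1\big((1-t)\ml_{\mathbf{G}}+t\mr_{\mathbf{G}}\big)^{k-1}dt=\tfrac1k\sum_{j=0}^{k-1}\ml_{\mathbf{G}}^{\,k-1-j}\mr_{\mathbf{G}}^{\,j}$, hence $\frac{df(\mathbf{G})}{d\mathbf{G}}\mathbf{X}=\sum_k a_k\sum_{j=0}^{k-1}\mathbf{G}^{\,j}\mathbf{X}\mathbf{G}^{\,k-1-j}$; the interchanges of $\sum$, $\int$ and $\lim$ are legitimate because the general term is dominated in the submultiplicative Frobenius norm by $|a_k|\,k\,R^{k-1}$ with $R=|\mathbf{G}|$, which is summable as $f$ is entire. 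On the other hand, the telescoping identity $(\mathbf{G}+s\mathbf{X})^k-\mathbf{G}^k=s\sum_{j=0}^{k-1}(\mathbf{G}+s\mathbf{X})^{j}\mathbf{X}\mathbf{G}^{\,k-1-j}$ gives $\big|\tfrac{(\mathbf{G}+s\mathbf{X})^k-\mathbf{G}^k}{s}\big|\leq k(|\mathbf{G}|+|\mathbf{X}|)^{k-1}|\mathbf{X}|$ for $|s|\leq1$, so letting $s\to0$ term by term in $\tfrac{f(\mathbf{G}+s\mathbf{X})-f(\mathbf{G})}{s}=\sum_k a_k\tfrac{(\mathbf{G}+s\mathbf{X})^k-\mathbf{G}^k}{s}$ recovers the same sum; a similar domination makes the remainder uniform on $|\mathbf{X}|=1$, so the limit is indeed the Fr\'echet derivative, proving (B) and the final assertion in that case.

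It remains to identify $\frac{df(\mathbf{G})}{d\mathbf{G}}$ with the Fr\'echet derivative in case (A). Specializing the computation of (B) to a polynomial $p$ shows that $\frac{dp(\mathbf{A})}{d\mathbf{A}}=\frac{p(\ml_{\mathbf{A}})-p(\mr_{\mathbf{A}})}{\ml_{\mathbf{A}}-\mr_{\mathbf{A}}}$ is the genuine differential of $\mathbf{A}\mapsto p(\mathbf{A})$ at every $\mathbf{A}$. Given $f\in\mathcal{C}^1(I)$, choose polynomials $p_n$ with $p_n'\to f'$ uniformly on $I$, hence $p_n\to f$ uniformly there. Then $p_n(\mathbf{A})\to f(\mathbf{A})$ uniformly over symmetric $\mathbf{A}$ with spectrum in a fixed compact subinterval of $I$, and, by \eqref{KD} together with the norm bound \eqref{normf} of Theorem~\ref{Lrep}(vii) applied to $p_n-f$ (whose divided differences converge uniformly since $p_n'\to f'$ does), $\frac{dp_n(\mathbf{A})}{d\mathbf{A}}\to\frac{df(\mathbf{A})}{d\mathbf{A}}$ uniformly over such $\mathbf{A}$; the classical theorem on interchanging limits with differentiation then gives that $\mathbf{A}\mapsto f(\mathbf{A})$ is Fr\'echet differentiable on $\mathbb{R}^{d\times d}_{\rm sym}$ with differential $\frac{df(\mathbf{A})}{d\mathbf{A}}$, which maps $\mathbb{R}^{d\times d}_{\rm sym}$ into itself since its multiplier $[\tfrac{f(g_i)-f(g_j)}{g_i-g_j}]$ is symmetric. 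I expect this last step to be the main obstacle: with only $\mathcal{C}^1$ data one cannot differentiate through the spectral projections of $\mathbf{G}$ when eigenvalues collide, so a direct perturbation argument is delicate, and it is precisely the detour through polynomial approximation — equivalently, the Dalecki\u{\i}--Kre\u{\i}n theorem \cite{Krein1956}, see also \cite[Sec.~4.2]{Najfeld_1995} — that makes the argument clean.
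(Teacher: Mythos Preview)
Your proof is correct and follows essentially the same route as the paper's: both unpack Definition~\ref{Def} to identify the Hadamard multiplier of the integral operator, evaluate it via the fundamental theorem of calculus to obtain the divided-difference matrix for (A), and use the Beta integral together with the binomial expansion for (B). The one substantive addition is your final paragraph, where you supply an independent polynomial-approximation argument for the Fr\'echet-differentiability claim in case~(A); the paper, by contrast, simply identifies \eqref{KD} with the classical Dalecki\u{\i}--Kre\u{\i}n formula and treats its status as the Fr\'echet derivative as known from the cited reference~\cite{Krein1956}.
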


In this work, we adhere to situation (A), where the commutator calculus from the previous section applies. Case~(B) is included in order to show that the identities \eqref{dfG} and \eqref{dfG2} are compatible with the Gateaux derivative, which is the universal notion of a~functional derivative. Note that the expression $f(\mathbf{G}+s\mathbf{X})$ cannot be interpreted using \eqref{matf} as~$\mathbf{X}$ might not be symmetric, and therefore case (B) relies on the power series expansion of~$f$. Of course, one could search for further (or more general) scenarios than (A), (B), where \eqref{dfG} still provides a~meaningful notion of the matrix derivative, but this is not our point here.

Any of the identities \eqref{dfG}, \eqref{dfG2} and \eqref{dfG0} can be seen as a~coordinate-free version of the Dalecki\u{\i}--Kre\u{\i}n formula \eqref{KD}; compare also with the original double operator integral form, e.g.,~in \cite[(1.12)]{Birman_2003} or \cite[(6)]{Farforovskaya_1998}. We note that \eqref{dfG2} in particular can be also viewed as a~generalization of the classical identity (cf.~\cite{Wilcox1967})
\begin{equation}\label{wilc0}
	\frac{de^{\mathbf{G}}}{d\mathbf{G}}\mathbf{X}=\int_0^1e^{(1-t)\mathbf{G}}\mathbf{X} e^{t\mathbf{G}}dt,
\end{equation}
where the matrix exponential is replaced by an~arbitrary function~$f$.

Identity \eqref{dfG0} is really just \eqref{KD} after an~application of Definition~\ref{Def}. Note that it aligns with the formal integration in \eqref{dfG2} according to the fundamental theorem of calculus. One only has to keep in mind that \eqref{dfG0} is to be interpreted as an~evaluation of the whole function 
\begin{equation*}
    (x,y)\mapsto\frac{f(x)-f(y)}{x-y} \qquad\text{at}\qquad(\ml_{\mathbf{G}},\mr_{\mathbf{G}}).
\end{equation*}
It transpires below (e.g.~in Section~\ref{Sproof}) that \eqref{dfG0} is very practical for the symbolic calculations of matrix derivatives. In fact, it reduces the problem to finding a~``nice'', well-defined expression for the \emph{scalar} function $\frac{f(x)-f(y)}{x-y}$, which is where various integral representations often come into play. Moreover, identity \eqref{dfG0} actually requires the least regularity from the function~$f$ in order to make sense (e.g.~integrability of the first derivatives of~$f$ in $U\times U$ would suffice). On the other hand, it does not immediately allow for a~power-series interpretation, unlike \eqref{dfG} and \eqref{dfG2}. Note also that one can compose both sides of \eqref{dfG0} with $\ad_{\mathbf{G}}$ and apply Theorem~\ref{Lrep}~(iii), (ix). This leads to
\begin{equation}\label{adf}
	\ad_{f(\mathbf{G})}=\frac{df(\mathbf{G})}{d\mathbf{G}}\ad_{\mathbf{G}},
\end{equation}
which retrieves the well-known chain rule for the matrix commutator, cf.~\cite[Lem.~1]{Powers_1973}, see also \cite[(1.9)]{Birman_2003} for general self-adjoint operators.

\subsection{Formulae for derivatives of special matrix functions}

Now we have all the tools ready to calculate a~symbolic derivative of a~particular function. We remark that the task of differentiating a~general matrix function was already investigated in the 1950s, see \cite{Rinehart_1957} and subsequent developments in \cite{Powers_1973} and references therein. Here, we shall be predominantly interested in the exponential, logarithm, trigonometric, and power functions, as these are fundamental in applications. Unlike in the scalar case, the same matrix derivative can always be written in multiple ways, which can be useful in various situations. The list of formulae in the following theorem includes those that are known in literature (see~\cite[X.4]{Bhatia_1997} in particular); however, we provide alternative (simple) proofs based on the formula \eqref{dfG} or \eqref{dfG0}, see Section~\ref{Sproof}.

\begin{theorem}\label{Tform}
	Let $\mathbf{G}\in\mathbb{R}^{d\times d}_{\rm sym}$ and let $\mathbf{A}\in\mathbb{R}^{d\times d}_{\rm sym}$ be positive definite. Then, for the derivatives of the classical matrix functions, the following identities hold:
	\begin{itemize}
		\item[\rm(E)] \textbf{Exponential}
		\begin{align}
			\frac{de^{\mathbf{G}}}{d\mathbf{G}}&=\int_0^1\ml_{e^{(1-s)\mathbf{G}}}\mr_{e^{s\mathbf{G}}}ds.\label{E0}\\
			&=\ml_{e^{\mathbf{G}}}\frac{1-e^{-2\ad_{\mathbf{G}}}}{2\ad_{\mathbf{G}}}\label{E1}\\
			&=\mr_{e^{\mathbf{G}}}\frac{e^{2\ad_{\mathbf{G}}}-1}{2\ad_{\mathbf{G}}}\label{E2}\\
			&=\ac_{e^{\mathbf{G}}}\frac{\tanh\ad_{\mathbf{G}}}{\ad_{\mathbf{G}}}\label{E3}\\
			&=e^{\ac_{\mathbf{G}}}\frac{\sinh\ad_{\mathbf{G}}}{\ad_{\mathbf{G}}}\label{E4}.
		\end{align}
		
		\item[\rm(L)] \textbf{Logarithm}
		\begin{align}			
			\frac{d\log\mathbf{A}}{d\mathbf{A}}&=\int_0^1\frac{ds}{(1-s)\ml_{\mathbf{A}}+s\mr_{\mathbf{A}}}\label{L5}\\
			&=\int_0^1\ml_{((1-s)\mathbf{I}+s\mathbf{A})^{-1}}\mr_{((1-s)\mathbf{I}+s\mathbf{A})^{-1}}ds\label{L6}\\
			&=\ml_{\mathbf{A}^{-1}}\frac{2\ad_{\log\mathbf{A}}}{1-e^{-2\ad_{\log\mathbf{A}}}}\label{L1}\\
			&=\mr_{\mathbf{A}^{-1}}\frac{2\ad_{\log\mathbf{A}}}{e^{2\ad_{\log\mathbf{A}}}-1}\label{L2}\\
			&=\ac_{\mathbf{A}^{-1}}\frac{2\ad_{\log\mathbf{A}}}{\sinh(2\ad_{\log\mathbf{A}})}\label{L3}\\
			&=e^{-\ac_{\log\mathbf{A}}}\frac{\ad_{\log\mathbf{A}}}{\sinh\ad_{\log\mathbf{A}}}\label{L4}\\
			&=\Big(\frac{de^{\mathbf{G}}}{d\mathbf{G}}\Big)^{-1}_{\mathbf{G}=\log\mathbf{A}}.\label{L0}
		\end{align}
		
		\item[\rm(P)] \textbf{The $r$-th power}
		\begin{align}
			\frac{d\mathbf{A}^r}{d\mathbf{A}}&=r\int_0^1\big((1-s)\ml_{\mathbf{A}}+s\mr_{\mathbf{A}}\big)^{r-1}ds\label{PP0}\\
			&=\sum_{k=0}^{r-1}\ml_{\mathbf{A}}^k\mr_{\mathbf{A}}^{r-1-k}\quad(\text{if}\quad r\in\mathbb{N})\label{PP00}\\
			&=\ac_{\mathbf{A}^{r-1}}\Big(1+\frac{\tanh((r-1)\ad_{\log\mathbf{A}})}{\tanh\ad_{\log\mathbf{A}}}\Big)\label{PP1}\\
			&=e^{(r-1)\ac_{\log\mathbf{A}}}\frac{\sinh(r\ad_{\log\mathbf{A}})}{\sinh\ad_{\log\mathbf{A}}}.\label{PP2}
		\end{align}
		
		\item[\rm(H)] \textbf{Hyperbolic functions}
		\begin{align}
			\frac{d\cosh\mathbf{G}}{d\mathbf{G}}&=\ac_{\sinh\mathbf{G}}\frac{\tanh\ad_{\mathbf{G}}}{\ad_{\mathbf{G}}}\\
			&=\sinh\ac_{\mathbf{G}}\frac{\sinh\ad_{\mathbf{G}}}{\ad_{\mathbf{G}}};\\
			\frac{d\sinh\mathbf{G}}{d\mathbf{G}}&=\ac_{\cosh\mathbf{G}}\frac{\tanh\ad_{\mathbf{G}}}{\ad_{\mathbf{G}}}\\
			&=\cosh\ac_{\mathbf{G}}\frac{\sinh\ad_{\mathbf{G}}}{\ad_{\mathbf{G}}}.
		\end{align}
		
		\item[\rm(G)] \textbf{Trigonometric functions}
		\begin{align}
			\frac{d\cos\mathbf{G}}{d\mathbf{G}}&=-\ac_{\sin\mathbf{G}}\frac{\tan\ad_{\mathbf{G}}}{\ad_{\mathbf{G}}}\\
			&=-\sin\ac_{\mathbf{G}}\frac{\sin\ad_{\mathbf{G}}}{\ad_{\mathbf{G}}};\\
			\frac{d\sin\mathbf{G}}{d\mathbf{G}}&=\ac_{\cos\mathbf{G}}\frac{\tan\ad_{\mathbf{G}}}{\ad_{\mathbf{G}}}\\
			&=\cos\ac_{\mathbf{G}}\frac{\sin\ad_{\mathbf{G}}}{\ad_{\mathbf{G}}}.
		\end{align}
	\end{itemize}
\end{theorem}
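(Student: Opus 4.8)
The plan is to prove every identity in Theorem~\ref{Tform} by the same two-step scheme. \emph{Step one}: reduce the operator $\frac{df(\mathbf{G})}{d\mathbf{G}}$ to a scalar function of two variables via \eqref{dfG0}, namely $\frac{df(\mathbf{G})}{d\mathbf{G}}=g(\ml_{\mathbf{G}},\mr_{\mathbf{G}})$ with $g(x,y)\coloneqq\frac{f(x)-f(y)}{x-y}$ interpreted through Definition~\ref{Def} and the convention \eqref{conv}; for the purely integral formulae, use \eqref{dfG2} directly instead. \emph{Step two}: manipulate the \emph{scalar} function $g$ into the desired closed form and translate back. The dictionary is $x\leftrightarrow\ml_{\mathbf{G}}$, $y\leftrightarrow\mr_{\mathbf{G}}$, and after the substitution $u=\tfrac{x-y}2$, $v=\tfrac{x+y}2$ one has $u\leftrightarrow\ad_{\mathbf{G}}$, $v\leftrightarrow\ac_{\mathbf{G}}$; more generally, for a scalar $\varphi$ the symbols $\varphi(x)$, $\tfrac{\varphi(x)+\varphi(y)}2$, $\tfrac{\varphi(x)-\varphi(y)}2$ correspond to $\ml_{\varphi(\mathbf{G})}$, $\ac_{\varphi(\mathbf{G})}$, $\ad_{\varphi(\mathbf{G})}$ by Theorem~\ref{Lrep}(ix). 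Since for a fixed $\mathbf{G}$ all operators of the form $h(\ml_{\mathbf{G}},\mr_{\mathbf{G}})$ act diagonally in the eigenbasis built from $\mathbf{Q}$, they commute, products may be reordered freely, and Theorem~\ref{Lrep}(iii),(iv) let one pass between products and quotients of scalar functions and compositions and inverses of operators (the latter requiring positivity of the relevant scalar factor).

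I would carry out the exponential block (E) first, as the template. Identity \eqref{E0} is immediate from \eqref{dfG2} with $f'(z)=e^z$: the summands of $(1-t)\ml_{\mathbf{G}}+t\mr_{\mathbf{G}}$ commute, so $e^{(1-t)\ml_{\mathbf{G}}+t\mr_{\mathbf{G}}}=\ml_{e^{(1-t)\mathbf{G}}}\mr_{e^{t\mathbf{G}}}$ by Theorem~\ref{Lrep}(iii),(ix), and the operator-valued integral is handled by linearity together with the norm bound \eqref{normf}. For \eqref{E1}--\eqref{E4} one computes $g(x,y)=\frac{e^x-e^y}{x-y}=e^{v}\tfrac{\sinh u}u$ using $e^x-e^y=2e^v\sinh u$ and $x-y=2u$; this is \eqref{E4}. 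Writing $e^v$ alternately as $e^xe^{-u}$, as $e^ye^{u}$, and as $\tfrac{e^x+e^y}2/\cosh u$ turns $g$ into $e^x\tfrac{1-e^{-2u}}{2u}$, $e^y\tfrac{e^{2u}-1}{2u}$, and $\tfrac{e^x+e^y}2\tfrac{\tanh u}u$, which are \eqref{E1}, \eqref{E2}, \eqref{E3}; the singularities of $\tfrac{\sinh u}u$, $\tfrac{1-e^{-2u}}{2u}$, $\tfrac{\tanh u}u$ at $u=0$ are removable and treated through \eqref{conv}.

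For the logarithm block (L) I would set $\mathbf{A}=e^{\mathbf{G}}$, $\mathbf{G}=\log\mathbf{A}$ and parametrise $x=e^p$, $y=e^q$, so that $g(x,y)=\frac{p-q}{e^p-e^q}=e^{-v}\tfrac{u}{\sinh u}$ (with $u=\tfrac{p-q}2$, $v=\tfrac{p+q}2$ now eigenvalues of $\ad_{\log\mathbf{A}}$, $\ac_{\log\mathbf{A}}$), which is \eqref{L4}; rewriting $e^{-v}$ as $e^{-p}e^{u}$, as $e^{-q}e^{-u}$, and as $\tfrac{e^{-p}+e^{-q}}2/\cosh u$ gives \eqref{L1}, \eqref{L2}, \eqref{L3} (using $\mathbf{A}^{-1}=e^{-\mathbf{G}}$ and $\sinh2u=2\sinh u\cosh u$). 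Identity \eqref{L5} follows from \eqref{dfG2} with $f'(z)=1/z$, the operator $(1-s)\ml_{\mathbf{A}}+s\mr_{\mathbf{A}}$ being invertible because its eigenvalues $(1-s)a_i+sa_j$ are positive, where $a_i>0$ are the eigenvalues of $\mathbf{A}$; \eqref{L6} follows from \eqref{dfG0} via the partial-fraction identity $\frac1{(1+as)(1+bs)}=\frac1{a-b}\big(\frac{a}{1+as}-\frac{b}{1+bs}\big)$, which gives $\int_0^1\frac{ds}{(1+s(x-1))(1+s(y-1))}=\frac{\log x-\log y}{x-y}$, the integrand being the product corresponding to $\ml_{((1-s)\mathbf{I}+s\mathbf{A})^{-1}}\mr_{((1-s)\mathbf{I}+s\mathbf{A})^{-1}}$. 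Finally \eqref{L0}: since $\exp$ and $\log$ are mutually inverse, $g_{\log}(x,y)\,g_{\exp}(\log x,\log y)=1$, so by Theorem~\ref{Lrep}(iii) the operator $\frac{d\log\mathbf{A}}{d\mathbf{A}}$ is a one-sided, hence two-sided, inverse of $\frac{de^{\mathbf{G}}}{d\mathbf{G}}\big|_{\mathbf{G}=\log\mathbf{A}}$, the latter invertible by \eqref{E1} and Theorem~\ref{Lrep}(iv). The power block (P) is entirely parallel: \eqref{PP0} from \eqref{dfG2}, \eqref{PP00} from the consistency property Theorem~\ref{Lrep}(i) and $\frac{x^r-y^r}{x-y}=\sum_{k=0}^{r-1}x^ky^{r-1-k}$, and \eqref{PP1}, \eqref{PP2} from $x=e^p$, $y=e^q$, which yields $g(x,y)=e^{(r-1)v}\tfrac{\sinh(ru)}{\sinh u}$ (that is \eqref{PP2}) combined with $\cosh((r-1)u)\big(1+\tfrac{\tanh((r-1)u)}{\tanh u}\big)=\tfrac{\sinh(ru)}{\sinh u}$, equivalently the addition formula $\sinh(ru)=\sinh((r-1)u)\cosh u+\cosh((r-1)u)\sinh u$, to produce \eqref{PP1}. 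The hyperbolic and trigonometric blocks (H), (G) follow from the substitution $x=v+u$, $y=v-u$ and the sum-to-product identities $\cosh x-\cosh y=2\sinh v\sinh u$, $\sinh x-\sinh y=2\cosh v\sinh u$, $\cos x-\cos y=-2\sin v\sin u$, $\sin x-\sin y=2\cos v\sin u$, while the two forms within each pair are related by $\tfrac{\tanh u}u=\tfrac{\sinh u}{u\cosh u}$, $\tfrac{\tan u}u=\tfrac{\sin u}{u\cos u}$ and the eigenvalue formulae $\tfrac{\sin g_i+\sin g_j}2=\sin v\cos u$, etc.

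As for the main obstacle: no single step is deep, and the bulk of the work is the dictionary bookkeeping above. The one point genuinely requiring care is the convention \eqref{conv} for forms containing a factor with a true pole or an indeterminate $0\cdot\infty$ product, such as $\ac_{\sin\mathbf{G}}\tfrac{\tan\ad_{\mathbf{G}}}{\ad_{\mathbf{G}}}$ when some $\tfrac{g_i-g_j}2$ is an odd multiple of $\tfrac\pi2$: the only admissible reading is that the displayed expression denotes the single operator $g(\ml_{\mathbf{G}},\mr_{\mathbf{G}})$ obtained from the \emph{fully cancelled}, hence continuous, scalar function, with the evaluation at $(g_i,g_j)$ taken as the limit \eqref{conv}. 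I would therefore state this interpretation explicitly once, at the start of the proof, together with the observation that the positivity hypotheses needed to invoke Theorem~\ref{Lrep}(iv) hold throughout because $\mathbf{A}$ is positive definite and because $\tfrac{1-e^{-2u}}{2u}$, $\tfrac{\sinh u}u$, $\tfrac{\tanh u}u$, $\tfrac{\sinh 2u}{2u}$, $\tfrac{u}{\tanh u}$ are all strictly positive on $\mathbb{R}$.
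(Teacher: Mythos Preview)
Your proposal is correct and follows essentially the same approach as the paper: reduce each derivative to the divided-difference scalar function via \eqref{dfG0} (or \eqref{dfG2} for the integral forms), manipulate at the scalar level, and translate back through Definition~\ref{Def} and Theorem~\ref{Lrep}. The only cosmetic differences are that the paper tends to derive one variant first (e.g.\ \eqref{E2}, \eqref{L2}) and then passes to the others via Lemma~\ref{Had}, and obtains \eqref{PP2} through the chain rule $\frac{d\mathbf{A}^r}{d\mathbf{A}}=r\big(\frac{de^{\mathbf{G}}}{d\mathbf{G}}\big)_{\mathbf{G}=r\log\mathbf{A}}\frac{d\log\mathbf{A}}{d\mathbf{A}}$ rather than by your direct scalar computation, but your uniform $u,v$ dictionary amounts to the same thing.
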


In Theorem~\ref{Tform}, expressions like $\frac{\sinh\ad_{\mathbf{G}}}{\ad_{\mathbf{G}}}$ and similar are understood as whole, i.e., as an~application of the function $x\mapsto\frac{\sinh x}x$ (having a~removable discontinuity at zero) to the operator $\ad_{\mathbf{G}}$ according to Definition~\ref{Def}, recall also \eqref{comap}.

Formula \eqref{E1} for the derivative of the matrix exponential appears probably in every book on Lie algebras; see, for instance, \cite[Th.~5.4]{Hall_2015} or \cite[Sect.~1.2., Th.~5]{Rossmann2002}. Its counterpart \eqref{E2}, which is in fact just a~combination of \eqref{E1} with the Hadamard lemma (cf.~Lemma~\ref{Had} below), is much less common, but sometimes appears in proofs of the Baker--Campbell--Hausdorff formula. Symmetrized identities \eqref{E3}, \eqref{E4} can be found in \cite[Sect.~4]{Najfeld_1995} and in \cite[Th.~10.13]{Higham_2008}, however encoded in a~different language, using the Kronecker representation, which eventually relies on the interpretation of power series. The inverted versions \eqref{L1}--\eqref{L4} seem to be not stated in literature this explicitly and for an~arbitrary positive definite matrix $\mathbf{A}$. It also appears that the potential strength of \eqref{L4} in physics and PDE applications has not been accordingly utilized so far, see Section~\ref{Sapp}~below. The identity \eqref{E0} is well known and is the same as \eqref{wilc0} mentioned above. Formula \eqref{L5} is the exact counterpart of \eqref{wilc0}, although it is unlikely to be found in the literature, since it is based on Definition~\ref{Def}. On the other hand, identity \eqref{L6} can be directly interpreted in a~usual way and it is indeed well known (cf.~\cite[(11.10)]{Higham_2008}), since it follows from the integral definition of the matrix logarithm (see \cite[(11.1)]{Higham_2008}). The formula for the derivative of the matrix power \eqref{PP2} can be directly applied to matrix-valued partial differential equations with diffusion; see Section~\ref{SPDE} below. A~similar formula to \eqref{PP0} is derived in \cite{Bhatia_2009} to solve a certain matrix equation, which can also be directly treated using \eqref{dfG0}. Finally, regarding the formulae for the matrix hyperbolic and trigonometric functions, these are obviously direct consequences of the previous identities for the matrix exponential. It seems that the only relatable result in the literature is \cite[Th.~12.4]{Higham_2008}, which uses the Kronecker representation.

We shall demonstrate in Section~\ref{Sapp} below that Theorem~\ref{Tform} is very useful in \emph{symbolic} matrix computations that are then used to obtain theoretical results. On the other hand, the question of optimal \emph{numerical} evaluation of the above derivatives and matrix functions in general is a~very different and highly non-trivial problematic, to which there corresponds an~enormous literature, see the overview \cite{Moler_2003} and references therein just for the computation of the matrix exponential. Let us at least provide the following worked-out example that should clarify how the above formulae can be evaluated concretely.

\begin{example}\label{example}
	Let us examine the identity
	\begin{equation}\label{L3b}
		\frac{d\log\mathbf{A}}{d\mathbf{A}}=\ac_{\mathbf{A}^{-1}}\frac{2\ad_{\log\mathbf{A}}}{\sinh(2\ad_{\log\mathbf{A}})}
	\end{equation}
from Theorem~\ref{Tform}. Note again that the right-hand side is the composition of two fourth-order tensors: part $\ac_{\mathbf{A}^{-1}}$ is expected to be there in the commutative case and the rest depends on the commutator. For simplicity, consider the case $d=2$. 
	
	Although we could apply Theorem~\ref{TCayley} directly, we shall use a little trick which consists of writing
	\begin{equation}\label{sinh}
		\frac{2x}{\sinh(2x)}=1+\theta(x)x,
	\end{equation}
	where
	\begin{equation*}
        \theta(x):=\frac2{\sinh(2x)}-\frac1x,\quad x\neq0.
    \end{equation*}
	The point is that the function $\theta$ is odd and thus, computing $\theta(\ad_{\log\mathbf{A}})$ now becomes very easy according to \eqref{cayley2d}:
	\begin{equation}\label{eq1a}
		\theta(\ad_{\log\mathbf{A}})\mathbf{X}=\frac{\theta(\frac{\log a_1-\log a_2}2)}{\log a_1-\log a_2}\big((\log\mathbf{A})\mathbf{X}-\mathbf{X}(\log\mathbf{A})\big),
	\end{equation}
	where $a_1$, $a_2$ are eigenvalues of $\mathbf{A}$. Noting also that $\frac{\theta(x)}{x}=\frac{\theta(|x|)}{|x|}$ and denoting the invariant
	\begin{equation*}
        \delta:=\frac12\sqrt{\operatorname{tr}^2\log\mathbf{A}-4\det\log\mathbf{A}}=\frac12|\log a_1-\log a_2|,
    \end{equation*}
	identity \eqref{eq1a} becomes just
	\begin{equation}\label{eq2a}
		\theta(\ad_{\log\mathbf{A}})\mathbf{X}=\frac{\theta(\delta)}{\delta}\ad_{\log\mathbf{A}}\mathbf{X}.
	\end{equation}
	Using this, \eqref{sinh} and the commutator calculus rules, the right-hand side of \eqref{L3b} simplifies as
	\begin{equation}
		\frac{d\log\mathbf{A}}{d\mathbf{A}}=\ac_{\mathbf{A}^{-1}}+\frac{\theta(\delta)}{\delta}\ac_{\mathbf{A}^{-1}}\ad_{\log\mathbf{A}}^2.
	\end{equation}
	In conclusion, in two dimensions, the derivative of the matrix logarithm can be evaluated according to
	\begin{align}
		\frac{d\log\mathbf{A}}{d\mathbf{A}}\mathbf{X}&=\lim_{s\to0}\frac{\log(\mathbf{A}+s\mathbf{X})-\log\mathbf{A}}{s}\nonumber\\
		&=\frac12(\mathbf{A}^{-1}\mathbf{X}+\mathbf{X}\mathbf{A}^{-1})\nonumber\\
        &\qquad+\frac{\theta(\delta)}{8\delta}\mathbf{A}^{-1}\big((\log\mathbf{A})^2\mathbf{X}-2(\log\mathbf{A})\mathbf{X}(\log\mathbf{A})+\mathbf{X}(\log\mathbf{A})^2\big)\nonumber\\
		&\qquad+\frac{\theta(\delta)}{8\delta}\big((\log\mathbf{A})^2\mathbf{X}-2(\log\mathbf{A})\mathbf{X}(\log\mathbf{A})+\mathbf{X}(\log\mathbf{A})^2)\mathbf{A}^{-1}\big).\label{res}
	\end{align}
	This could be, of course, further manipulated using, e.g., the Cayley--Hamilton theorem for $\log\mathbf{A}$, however the core idea should already be clear. Note that the resulting formula \eqref{res} indeed only sees the matrix $\mathbf{A}$ and not its spectral decomposition.
\end{example}

Returning to the statement of Theorem~\ref{Tform}, we now explain the apparent dichotomy between \eqref{E3} and \eqref{E4} and so on. This is caused by the unique properties of the exponential map.

\begin{lemma}\label{Had}
	Let $p,q\in\mathbb{R}$, $\mathbf{G}\in\mathbb{R}^{d\times d}_{\rm sym}$ and set $\mathbf{A}=e^{\mathbf{G}}$. Then
	\begin{align}
		\ml_{\mathbf{A}^p}\mr_{\mathbf{A}^q}&=e^{(p+q)\ac_{\mathbf{G}}}e^{(p-q)\ad_{\mathbf{G}}};\label{al1}\\
		\ac_{\mathbf{A}}&=e^{\ac_{\mathbf{G}}}\cosh\ad_{\mathbf{G}};\label{al2}\\
		\ad_{\mathbf{A}}&=e^{\ac_{\mathbf{G}}}\sinh\ad_{\mathbf{G}};\label{al3}\\
		\ad_{\mathbf{A}}&=\ac_{\mathbf{A}}\tanh\ad_{\mathbf{G}};\label{al4}\\
		\ac_{\mathbf{G}}&=\frac12(\log\ml_{\mathbf{A}}+\log\mr_{\mathbf{A}});\label{al5}\\
		\ad_{\mathbf{G}}&=\frac12(\log\ml_{\mathbf{A}}-\log\mr_{\mathbf{A}}).\label{al6}
	\end{align}
\end{lemma}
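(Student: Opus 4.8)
The plan rests on the observation that every operator appearing in Lemma~\ref{Had} is a function, in the sense of Definition~\ref{Def}, of the \emph{single} symmetric matrix $\mathbf{G}$: since $\mathbf{A}^p=e^{p\mathbf{G}}=\mathbf{Q}e^{p\mathbf{g}}\mathbf{Q}^{\mathsf{T}}$ by \eqref{matf}, in the coordinates $\mathbf{Y}=\mathbf{Q}^{\mathsf{T}}\mathbf{X}\mathbf{Q}$ each of $\ml_{\mathbf{G}},\mr_{\mathbf{G}},\ac_{\mathbf{G}},\ad_{\mathbf{G}},\ml_{\mathbf{A}^p},\mr_{\mathbf{A}^q},\ac_{\mathbf{A}},\ad_{\mathbf{A}},\cosh\ad_{\mathbf{G}},e^{\ac_{\mathbf{G}}},\dots$ acts by multiplying the entry $Y_{ij}$ by a scalar ($g_i$; $g_j$; $\tfrac{g_i+g_j}{2}$; $\tfrac{g_i-g_j}{2}$; $e^{pg_i}$; $e^{qg_j}$; $\tfrac{e^{g_i}+e^{g_j}}{2}$; $\tfrac{e^{g_i}-e^{g_j}}{2}$; $\cosh\tfrac{g_i-g_j}{2}$; $e^{(g_i+g_j)/2}$; etc.), composition corresponds to Hadamard multiplication of these scalar arrays (Theorem~\ref{Lrep}(iii)), and all of them commute (Theorem~\ref{Lrep}(ii)). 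Hence each of \eqref{al1}--\eqref{al6} collapses to a one-line scalar identity in the pair $(g_i,g_j)$.

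The efficient order is to establish \eqref{al1} first and deduce the rest. I would prove \eqref{al1} either entrywise — the scalar $e^{pg_i+qg_j}$ of $\ml_{\mathbf{A}^p}\mr_{\mathbf{A}^q}$ equals $e^{(p+q)(g_i+g_j)/2}e^{(p-q)(g_i-g_j)/2}$ because both exponents equal $pg_i+qg_j$ — or coordinate-free: $\ml_{\mathbf{A}^p}\mr_{\mathbf{A}^q}=e^{p\ml_{\mathbf{G}}+q\mr_{\mathbf{G}}}$ by Theorem~\ref{Lrep}(ix),(iii) and the commutativity of $\ml_{\mathbf{G}},\mr_{\mathbf{G}}$, while $p\ml_{\mathbf{G}}+q\mr_{\mathbf{G}}=(p+q)\ac_{\mathbf{G}}+(p-q)\ad_{\mathbf{G}}$ by \eqref{qua} and $\ac_{\mathbf{G}},\ad_{\mathbf{G}}$ commute. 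Taking $(p,q)=(1,0)$ and $(0,1)$ in \eqref{al1} gives $\ml_{\mathbf{A}}=e^{\ac_{\mathbf{G}}}e^{\ad_{\mathbf{G}}}$ and $\mr_{\mathbf{A}}=e^{\ac_{\mathbf{G}}}e^{-\ad_{\mathbf{G}}}$, whose half-sum and half-difference, together with $\tfrac12(e^z\pm e^{-z})=\cosh z,\sinh z$ (Theorem~\ref{Lrep}(iii)), yield \eqref{al2} and \eqref{al3}. For \eqref{al4}, both $\ac_{\mathbf{A}}$ and $\cosh\ad_{\mathbf{G}}$ are bijective by Theorem~\ref{Lrep}(iv) (their scalars $\tfrac{e^{g_i}+e^{g_j}}{2}$ and $\cosh\tfrac{g_i-g_j}{2}$ are positive), so dividing \eqref{al3} by \eqref{al2} and using $(\cosh z)^{-1}\sinh z=\tanh z$ gives $\ad_{\mathbf{A}}=\ac_{\mathbf{A}}\tanh\ad_{\mathbf{G}}$. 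Finally \eqref{al5}, \eqref{al6} are essentially trivial: Theorem~\ref{Lrep}(ix) gives $\log\ml_{\mathbf{A}}=\ml_{\log\mathbf{A}}=\ml_{\mathbf{G}}$ and $\log\mr_{\mathbf{A}}=\mr_{\mathbf{G}}$ (the matrix logarithm being admissible since $\operatorname{spec}\mathbf{A}\subset(0,\infty)$ and $\log e^{\mathbf{G}}=\mathbf{G}$), after which $\tfrac12(\ml_{\mathbf{G}}\pm\mr_{\mathbf{G}})=\ac_{\mathbf{G}},\ad_{\mathbf{G}}$ is just \eqref{qua}.

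All the algebra is routine, so there is no genuine obstacle; the only point to watch is the bookkeeping about domains — checking that each scalar function is evaluated only on the spectrum where it is defined, i.e.\ the positivity of $\operatorname{spec}\mathbf{A}=\{e^{g_i}\}$ underpinning every use of the matrix logarithm and of the invertibility in Theorem~\ref{Lrep}(iv), and the fact that $\cosh,\sinh,\tanh$ are entire so the value $z=0$ (arising when $g_i=g_j$) is harmless and the convention \eqref{conv} is not even needed here. As a closing remark I would note that \eqref{al1} with $(p,q)=(1,-1)$ specializes to $\ml_{\mathbf{A}}\mr_{\mathbf{A}^{-1}}=e^{2\ad_{\mathbf{G}}}$, that is, $e^{\mathbf{G}}\mathbf{X}e^{-\mathbf{G}}=e^{2\ad_{\mathbf{G}}}\mathbf{X}$, which is the operator form of the classical Hadamard lemma and explains the name.
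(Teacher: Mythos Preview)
Your proposal is correct and follows essentially the same route as the paper: the coordinate-free derivation of \eqref{al1} via $\ml_{\mathbf{A}^p}\mr_{\mathbf{A}^q}=e^{p\ml_{\mathbf{G}}}e^{q\mr_{\mathbf{G}}}=e^{p\ml_{\mathbf{G}}+q\mr_{\mathbf{G}}}$ and the treatment of \eqref{al4}--\eqref{al6} match the paper almost verbatim. The only organizational difference is that you obtain \eqref{al2}--\eqref{al3} by specializing \eqref{al1} at $(p,q)=(1,0),(0,1)$ and taking half-sum/half-difference, whereas the paper verifies them by the reverse computation $e^{\ac_{\mathbf{G}}\pm\ad_{\mathbf{G}}}=e^{\ml_{\mathbf{G}}},e^{\mr_{\mathbf{G}}}$; these are the same identity read in opposite directions, so there is no substantive divergence.
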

\begin{proof}
	First we recall the general property of the exponential map (defined by its Taylor series) that
	\begin{equation}\label{expcom}
		e^{\mathsf{X}}e^{\mathsf{Y}}=e^{\mathsf{X}+\mathsf{Y}}\quad\text{whenever the operators}\quad\mathsf{X},\mathsf{Y}\in\mathcal{L}(\mathbb{R}^{d\times d})\quad\text{commute},
	\end{equation}
	see, e.g., \cite[p.~3, (b)]{Rossmann2002}. Hence, applying also \eqref{funcm} gives
	\begin{equation*}
		\ml_{\mathbf{A}^p}\mr_{\mathbf{A}^q}=\ml_{e^{p\mathbf{G}}}\mr_{e^{q\mathbf{G}}}=e^{p\ml_{\mathbf{G}}}e^{q\mr_{\mathbf{G}}}=e^{p\ml_{\mathbf{G}}+q\mr_{\mathbf{G}}}
	\end{equation*}
	and thus \eqref{al1} follows. For \eqref{al2} and \eqref{al3}, we again use \eqref{funcm} to get
	\begin{equation*}
		\begin{matrix}e^{\ac_{\mathbf{G}}}\cosh\ad_{\mathbf{G}}\\e^{\ac_{\mathbf{G}}}\sinh\ad_{\mathbf{G}}\end{matrix}\Big\}=\frac{e^{\ac_{\mathbf{G}}+\ad_{\mathbf{G}}}\pm e^{\ac_{\mathbf{G}}-\ad_{\mathbf{G}}}}2=\frac{e^{\ml_{\mathbf{G}}}\pm e^{\mr_{\mathbf{G}}}}2=\frac{\ml_{\mathbf{A}}\pm\mr_{\mathbf{A}}}2=\big\{\begin{matrix}\ac_{\mathbf{A}}\\\ad_{\mathbf{A}}\end{matrix}.
	\end{equation*}
	Identity \eqref{al4} can be obtained by combining \eqref{al2} and \eqref{al3}:
	\begin{equation*}
		\ad_{\mathbf{A}}=e^{\ac_{\mathbf{G}}}\sinh\ad_{\mathbf{G}}=e^{\ac_{\mathbf{G}}}\cosh\ad_{\mathbf{G}}\tanh\ad_{\mathbf{G}}=\ac_{\mathbf{A}}\tanh\ad_{\mathbf{G}}.
	\end{equation*}
	Finally, the last two relations \eqref{al5} and \eqref{al6} follow immediately by writing $\mathbf{G}=\log\mathbf{A}$ and using \eqref{funcm}.
\end{proof}

The identity \eqref{al1} applies to an~expression of the type $\mathbf{A}^p\mathbf{X}\mathbf{A}^q$ and can be seen as its decomposition in the symmetric multiplication of $\mathbf{X}$ by $\mathbf{A}^{p+q}$ and the correction due to the non-commutativity of $\mathbf{A}$ and $\mathbf{X}$. The special case $p=\frac12$, $q=-\frac12$ of \eqref{al1} recovers Campbell's identity (also called Hadamard's lemma in quantum mechanics) in the form
\begin{equation}\label{hdd}
	\mathbf{A}^{\frac12}\mathbf{X}\mathbf{A}^{-\frac12}=e^{\ad_{\log\mathbf{A}}}\mathbf{X},
\end{equation}
see, e.g.~\cite[Prop.~3.35]{Hall_2015}. Another distinctive case $p=\frac12$, $q=\frac12$ of \eqref{al1} together with \eqref{al2} shows that
\begin{equation}\label{hac}
	\mathbf{A}^{\frac12}\mathbf{X}\mathbf{A}^{\frac12}=e^{\ac_{\log\mathbf{A}}}\mathbf{X}=\frac1{2\cosh\ad_{\log\mathbf{A}}}(\mathbf{A}\mathbf{X}+\mathbf{X}\mathbf{A}),
\end{equation}
which is well-defined thanks to Theorem~\ref{Lrep}~(iv) and provides another handy tool for symbolic manipulations. The identity \eqref{al4} represents probably the most direct relation between the commutator and the anti-commutator.

\subsection{Further identities for the derivative of the matrix power and logarithm}

In numerous physics applications, one is interested in an~application of a~matrix derivative to a~certain linear combination of the left and right matrix multiplications. In particular, one would like to have matrix analogies of the standard calculus identities $\frac{da^r}{da}(ax)=ra^rx$ and $\frac{d\log a}{da}(ax)=x$ when $ax$ is replaced by $\mathbf{A}\mathbf{X}+\mathbf{X}\mathbf{A}$, for example (note that the case $\mathbf{A}\mathbf{X}-\mathbf{X}\mathbf{A}$ is already dealt with by \eqref{adf}, but we keep it in the corollaries below for comparison). Thanks to Theorem~\ref{Tform}, which expresses the Fr\'echet derivatives explicitly, the commutator calculus properties from Theorem~\ref{Lrep} and Lemma~\ref{Had}, this is now rather easy. As an~immediate consequence of \eqref{PP2} and \eqref{al1}, we obtain the following.

\begin{corollary}\label{corpow}
	For every $\mathbf{A}\in\mathbb{R}^{d\times d}_{\rm sym}$ positive definite, all $\mathbf{X}\in\mathbb{R}^{d\times d}$ and any $p,q,r\in\mathbb{R}$, there holds
	\begin{align}
		\frac{d\mathbf{A}^r}{d\mathbf{A}}(\mathbf{A}^p\mathbf{X}\mathbf{A}^q-\mathbf{A}^q\mathbf{X}\mathbf{A}^p)=2e^{(p+q+r-1)\ac_{\log\mathbf{A}}}\frac{\sinh(r\ad_{\log\mathbf{A}})\sinh((p-q)\ad_{\log\mathbf{A}})}{\sinh\ad_{\log\mathbf{A}}}\mathbf{X};\label{W1}\\
		\frac{d\mathbf{A}^r}{d\mathbf{A}}(\mathbf{A}^p\mathbf{X}\mathbf{A}^q+\mathbf{A}^q\mathbf{X}\mathbf{A}^p)=2e^{(p+q+r-1)\ac_{\log\mathbf{A}}}\frac{\sinh(r\ad_{\log\mathbf{A}})\cosh((p-q)\ad_{\log\mathbf{A}})}{\sinh\ad_{\log\mathbf{A}}}\mathbf{X}.\label{W2}
	\end{align}
\end{corollary}

The previous result is trivial when $r=0$. Instead, the appropriate limiting version as $r\to0$ is captured by the next result. As it concerns the derivative of the matrix logarithm, it is of great importance in applications within continuum mechanics, see the next section. 

\begin{corollary}\label{Cor}
	For every $\mathbf{A}\in\mathbb{R}^{d\times d}_{\rm sym}$ positive definite, all $\mathbf{X}\in\mathbb{R}^{d\times d}$ and any $p,q\in\mathbb{R}$, there holds
	\begin{align}
		\frac{d\log\mathbf{A}}{d\mathbf{A}}(\mathbf{A}^p\mathbf{X}\mathbf{A}^q-\mathbf{A}^q\mathbf{X}\mathbf{A}^p)&=2e^{(p+q-1)\ac_{\log\mathbf{A}}}\frac{\sinh((p-q)\ad_{\log\mathbf{A}})}{\sinh\ad_{\log\mathbf{A}}}\ad_{\log\mathbf{A}}\mathbf{X};\label{O0}\\
		\frac{d\log\mathbf{A}}{d\mathbf{A}}(\mathbf{A}^p\mathbf{X}\mathbf{A}^q+\mathbf{A}^q\mathbf{X}\mathbf{A}^p)&=2e^{(p+q-1)\ac_{\log\mathbf{A}}}\frac{\cosh((p-q)\ad_{\log\mathbf{A}})}{\sinh\ad_{\log\mathbf{A}}}\ad_{\log\mathbf{A}}\mathbf{X}.\label{O1}
	\end{align}
	In particular, the case $p=1$, $q=0$ gives
	\begin{align}
		\frac{d\log\mathbf{A}}{d\mathbf{A}}(\mathbf{A}\mathbf{X}-\mathbf{X}\mathbf{A})&=2\ad_{\log\mathbf{A}}\mathbf{X};\label{P1}\\
		\frac{d\log\mathbf{A}}{d\mathbf{A}}(\mathbf{A}\mathbf{X}+\mathbf{X}\mathbf{A})&=\frac{2\ad_{\log\mathbf{A}}}{\tanh\ad_{\log\mathbf{A}}}\mathbf{X},\label{P2}
	\end{align}
	while the case $p=q=\frac12$ yields
	\begin{equation}\label{P3}
		\frac{d\log\mathbf{A}}{d\mathbf{A}}\mathbf{A}^{\frac12}\mathbf{X}\mathbf{A}^{\frac12}=\frac{\ad_{\log\mathbf{A}}}{\sinh\ad_{\log\mathbf{A}}}\mathbf{X}.
	\end{equation}
	
	Relations \eqref{P2} and \eqref{P3} can be further refined as
	\begin{align}
		\frac{d\log\mathbf{A}}{d\mathbf{A}}(\mathbf{A}\mathbf{X}+\mathbf{X}\mathbf{A})&=2\mathbf{X}+2\mu(\ad_{\log\mathbf{A}})\ad_{\log\mathbf{A}}^2\mathbf{X};\label{spec1}\\
		\frac{d\log\mathbf{A}}{d\mathbf{A}}\mathbf{A}^{\frac12}\mathbf{X}\mathbf{A}^{\frac12}&=\mathbf{X}-\nu(\ad_{\log\mathbf{A}})\ad_{\log\mathbf{A}}^2\mathbf{X},\label{spec2}
	\end{align}
	where
	\begin{equation*}
		\mu(x):=\frac1{x\tanh{x}}-\frac1{x^2}\quad\text{and}\quad\nu(x):=\frac1{x^2}-\frac1{x\sinh{x}}
	\end{equation*}
	are positive, continuous functions on $\mathbb{R}$ if $\mu(0):=\frac13$ and $\nu(0):=\frac16$. Moreover, there holds
	\begin{align}
		\frac{d\log\mathbf{A}}{d\mathbf{A}}(\mathbf{A}\mathbf{X}+\mathbf{X}\mathbf{A})=2\mathbf{X}\quad&\text{if and only if}\quad\mathbf{A}\mathbf{X}=\mathbf{X}\mathbf{A};\label{eq1}\\
		\frac{d\log\mathbf{A}}{d\mathbf{A}}\mathbf{A}^{\frac12}\mathbf{X}\mathbf{A}^{\frac12}=\mathbf{X}\quad&\text{if and only if}\quad\mathbf{A}\mathbf{X}=\mathbf{X}\mathbf{A}.\label{eq2}
	\end{align}
\end{corollary}
\begin{proof}
	From \eqref{L4} and \eqref{al1}, we infer that
	\begin{align*}
		\frac{d\log\mathbf{A}}{d\mathbf{A}}(\ml_{\mathbf{A}^p}\mr_{\mathbf{A}^q}\mp\ml_{\mathbf{A}^q}\mr_{\mathbf{A}^p})&=e^{(p+q-1)\ac_{\log\mathbf{A}}}\frac{\ad_{\log\mathbf{A}}}{\sinh\ad_{\log\mathbf{A}}}(e^{(p-q)\ad_{\log\mathbf{A}}}\mp e^{(q-p)\ad_{\log\mathbf{A}}}),
	\end{align*}
	which is \eqref{O0} and \eqref{O1} up to a rearrangement. Identities \eqref{P1}, \eqref{P2}, and \eqref{P3} are really just particular cases of \eqref{O0} and \eqref{O1}. 
	
	In order to show \eqref{spec1}, note that
	\begin{equation}\label{exp1}
		\frac{x}{\tanh x}=1+\mu(x)x^2,
	\end{equation}
	and that $\mu$ is positive since
	\begin{equation*}
		x\tanh{x}=|x||\tanh{x}|<|x|^2=x^2
	\end{equation*}
	for every $x\neq0$.	Analogously, for \eqref{spec2}, we have
	\begin{equation}\label{exp2}
		\frac{x}{\sinh{x}}=1-\nu(x)x^2
	\end{equation}
	and the positivity of $\nu$ follows from
	\begin{equation*}
		x\sinh{x}=|x||\sinh{x}|>|x|^2=x^2.
	\end{equation*}
	With the help of the calculus rules in Theorem~\ref{Lrep}, we can now apply \eqref{exp1} and \eqref{exp2} in \eqref{P2} and \eqref{P3}, leading to \eqref{spec1} and \eqref{spec2}, respectively. This also readily proves the reverse implications in \eqref{eq1} and \eqref{eq2}. Conversely, assume that $\mu(\ad_{\log\mathbf{A}})\ad_{\log\mathbf{A}}^2\mathbf{X}={\boldsymbol0}$. By the positivity of $\mu$ and Theorem~\ref{Lrep}~(iv), this is equivalent to $\ad_{\log\mathbf{A}}^2\mathbf{X}={\boldsymbol0}$. However, due to the symmetry of $\mathbf{A}$ and using Theorem~\ref{Lrep}~(vi), this actually implies that $\ad_{\log\mathbf{A}}\mathbf{X}={\boldsymbol0}$ since
	\begin{equation*}
		0=\mathbf{X}\cdot\ad_{\log\mathbf{A}}^2\mathbf{X}=|\ad_{\log\mathbf{A}}\mathbf{X}|^2,
	\end{equation*}
	Then, for instance due to \eqref{al4}, we have also that
	\begin{equation*}
		\ad_{\mathbf{A}}\mathbf{X}=\ac_{\mathbf{A}}\tanh\ad_{\log\mathbf{A}}\mathbf{X}=\ac_{\mathbf{A}}\frac{\tanh\ad_{\log\mathbf{A}}}{\ad_{\log\mathbf{A}}}\ad_{\log\mathbf{A}}\mathbf{X}={\boldsymbol0},
	\end{equation*}
	proving \eqref{eq1}. The proof of the other equivalence \eqref{eq2} is completely analogous.
\end{proof}

The idea of rewriting \eqref{P2} as \eqref{spec1} can be used in many of the identities of Theorem~\ref{Tform} and is particularly useful for quantifying the dependence of the derivative on the commutator of $\mathbf{G}$ (or $\mathbf{A}$) and $\mathbf{X}$. We see that the commutator calculus reduces the problem to just finding limits and determining the sign of the \emph{real} functions. This provides a~concise approach to many of the problems studied in \cite{Neff2024}, cf.~Prop.~A.31~therein in particular.

\subsection{Possible extension to non-symmetric matrices}

The assumption of symmetry of the matrix $\mathbf{G}$ leads to the fact that $\mathbf{G}$ itself and also the operators derived from $\ml_{\mathbf{G}}$, $\mr_{\mathbf{G}}$ are self-adjoint, with real eigenvalues. Thus, this case represents a~natural extension of the classical \emph{real-variable} calculus. This is an~optimal setting for the applications in continuum mechanics (see Sect.~\ref{Sold} below), where the matrices can represent physical stresses or strains, which, at least within standard theories, are indeed elements of $\mathbb{R}^{3\times 3}_{\rm sym}$.

However, note that the identities stated in Theorem~\ref{TCayley}, Theorem~\ref{Tform}, Lemma~\ref{Had} or Corollaries~\ref{corpow}, \ref{Cor} do not rely on the spectral decomposition of~$\mathbf{G}$ in any explicit way. The only exceptions are the relations \eqref{eq1} and \eqref{eq2}, which exploit the symmetry of $\mathbf{A}$ only to reduce the characterizing condition $\ad_{\mathbf{A}}^2\mathbf{X}=\mathbf{0}$. Although Definition~\ref{Def} is built using Schur's spectral decomposition, we can infer from Theorem~\ref{Lrep}~(viii) and, in particular from Theorem~\ref{TCayley}, that it is rather only an~auxiliary tool for defining the symbol $f(\ml_{\mathbf{G}},\mr_{\mathbf{G}})$. Based on that, we make a~following conjecture left to prove (or disprove) elsewhere.

\begin{conjecture}\label{Conj}
	With a~suitable generalization of Definition~\ref{Def}, all identities of Theorem~\ref{Tform}, Lemma~\ref{Had}, and Corollaries~\ref{corpow},~\ref{Cor} (except \eqref{eq1}, \eqref{eq2}) continue to hold for every $\mathbf{G}\in\mathbb{C}^{d\times d}$ and for any $\mathbf{A}\in\mathbb{C}^{d\times d}$ such that $\log\mathbf{A}$ exists.
\end{conjecture}

For the definition of a~logarithm of a~general matrix, we can refer to \cite[Th.~1]{Wouk_1965}. Then, there appear to be numerous approaches towards defining $f(\ad_{\mathbf{G}})$ for $\mathbf{G}\in\mathbb{C}^{d\times d}$. First, of course, one can apply the general theory of holomorphic calculus to the operator $\ad_{\mathbf{G}}$ (and similar). However, this requires that the applied function is holomorphic in an~open subset of $\mathbb{C}$ that contains the spectrum of the operator, even in the case where the spectrum is actually real. The second possibility is to replace the Schur decomposition of $\mathbf{G}$ by the singular value decomposition. This obviously places the least requirements on the applied functions; on the other hand, one has to consider an~appropriate notion of a~generalized matrix function and its derivative, leading to more involved formulae than those in \eqref{had} and \eqref{KD}, see \cite{Israel_1973} and especially \cite{Noferini_2017} for details. Another approach is to use the spectral measure, that also offers a~way to extend the theory to general self-adjoint operators. For this, see the original Russian literature, cf.~\cite{Birman_2003}, \cite{Krein1956}, \cite{Farforovskaya_1998} and references therein. Further alternative approaches to defining and evaluating matrix functions and their derivatives (e.g.~using polynomial interpolation), can be found in the works by Higham \& Al-Mohy (cf.~\cite{Mohy2024}, \cite{Mohy_2010}). As we are here primarily concerned with applications of the commutator calculus to symmetric matrices, we avoid any of the above technicalities.

\section{Applications of the theory}\label{Sapp}

In this section, we demonstrate the utility of our results in several applications, first within general mathematical analysis and then for the theory of viscoelastic fluids in particular.

\subsection{Monotonicity of matrix functions}

Formula \eqref{dfG0} together with Theorem~\ref{Lrep} can be used to easily derive various theoretical properties of matrix functions. One~particularly useful property in analysis is the \emph{monotonicity}, which, for matrix functions, can be postulated as the inequality
\begin{equation*}
    \big(f(\mathbf{G})-f(\mathbf{H})\big)\cdot(\mathbf{G}-\mathbf{H})\geq0
\end{equation*}
for all $\mathbf{G},\mathbf{H}\in\mathbb{R}^{d\times d}_{\rm sym}$ whose spectra lie in some interval $I\subset\mathbb{R}$ where $f$ is defined. 

Using polynomial interpolation methods, it can be shown that if $f\in\mathcal{C}^1(I;\mathbb{R})$, then the function
\begin{equation*}
    g(s):=f(\mathbf{H}+s(\mathbf{G}-\mathbf{H}))\cdot(\mathbf{G}-\mathbf{H})
\end{equation*}
inherits the property $g\in\mathcal{C}^1(I;\mathbb{R})$, see~\cite[Th.~6.6.14]{Horn_1991}. Hence, one can apply the mean value theorem to $g$ and obtain this way $\mathbf{G}_{\xi}:=\mathbf{H}+\xi(\mathbf{G}-\mathbf{H})$, $\xi\in(0,1)$, such that
\begin{align*}
	\big(f(\mathbf{G})-f(\mathbf{H})\big)\cdot(\mathbf{G}-\mathbf{H})&=\frac{df(\mathbf{G}_{\xi})}{d\mathbf{G}_{\xi}}(\mathbf{G}-\mathbf{H})\cdot(\mathbf{G}-\mathbf{H})\\
	&=\frac{f(\ml_{\mathbf{G}_{\xi}})-f(\mr_{\mathbf{G}_{\xi}})}{\ml_{\mathbf{G}_{\xi}}-\mr_{\mathbf{G}_{\xi}}}(\mathbf{G}-\mathbf{H})\cdot(\mathbf{G}-\mathbf{H})
\end{align*}
where we used \eqref{dfG0} in the last step. Thus, in view of the multiplicative, invertible, and symmetry properties from Theorem~\ref{Lrep}~(iii), (iv) and (iv), respectively, the monotonicity problem is reduced to a simple investigation of the sign of the function $(x,y)\mapsto\frac{f(x)-f(y)}{x-y}$ in $I\times I$, that is, the monotonicity of $f$ in the usual sense.

\begin{corollary}
	Let $f\in\mathcal{C}^1(I;\mathbb{R})$ be a nondecreasing function in an interval $I\subset\mathbb{R}$. Then, for every $\mathbf{G},\mathbf{H}\in\mathbb{R}^{d\times d}_{\rm sym}$ whose eigenvalues are contained in $I$, there exists $\xi\in(0,1)$ such that
	\begin{equation}\label{mono}
        \big(f(\mathbf{G})-f(\mathbf{H})\big)\cdot(\mathbf{G}-\mathbf{H})=\Bigg|\sqrt{\frac{f(\ml_{\mathbf{G}_{\xi}})-f(\mr_{\mathbf{G}_{\xi}})}{\ml_{\mathbf{G}_{\xi}}-\mr_{\mathbf{G}_{\xi}}}}(\mathbf{G}-\mathbf{H})\Bigg|^2,
    \end{equation}
	where $\mathbf{G}_{\xi}=(1-\xi)\mathbf{H}+\xi\mathbf{G}$.
\end{corollary}

This agrees with the results of \cite[Prop.~4.8]{Martin_2015} obtained by different methods.

\subsection{Log-convexity of the Sobolev norm for matrix-valued functions}\label{SPDE}

For a~smooth and positive scalar function $u:\mathbb{R}^d\to(0,\infty)$ it is an~elementary fact that
\begin{equation}\label{pwid}
	\frac1{r}\nabla u^{r}\cdot\nabla u=\frac{4}{(r+1)^2}|\nabla u^{\frac{r+1}2}|^2
\end{equation}
for any $r\neq0,-1$. This identity is often crucial for the analysis of partial differential equations with some kind of a~diffusion term. Therein, the left-hand side of \eqref{pwid} is typically controlled, while the right-hand side of \eqref{pwid} allows for an~application of the Sobolev embedding inequality, often leading to optimal integrability estimates of solutions. For matrix-valued PDEs, when $u$ is replaced by a~positive definite matrix function $\mathbf{B}$, it has been proved (cf.~\cite{BathoryOdh}) that \eqref{pwid} can only hold as an~inequality
\begin{equation}\label{pwidB}
	\frac1{r}\nabla\mathbf{B}^{r}\cdot\nabla\mathbf{B}\geq\frac{4}{(r+1)^2}|\nabla\mathbf{B}^{\frac{r+1}2}|^2.
\end{equation}
However, the direction of this inequality is somewhat remarkable; it implies that the a~priori control of $\frac1{r}\nabla\mathbf{B}^{r}\cdot\nabla\mathbf{B}$ conveys more information in the non-commutative case than in the commutative case. Indeed, now using the commutator calculus, we can prove the explicit formula
\begin{equation}\label{expl}
	\frac1r\nabla\mathbf{B}^r\cdot\nabla\mathbf{B}=\frac{4}{(r+1)^2}\Big(|\nabla\mathbf{B}^{\frac{r+1}2}|^2+|\omega(\ad_{\log\mathbf{B}})\ad_{\log\mathbf{B}}\nabla\mathbf{B}^{\frac{r+1}2}|^2\Big),
\end{equation}
where
\begin{equation*}
	\omega(x):=\sqrt{\frac{(r+1)^2\sinh(rx)\sinh{x}}{4rx^2\sinh^2(\frac{r+1}2x)}-\frac1{x^2}}
\end{equation*}
for $x\neq0$ and $\omega(0):=\frac{|r-1|}{2\sqrt3}$ defines a~continuous \emph{positive} function on $\mathbb{R}$. This shows that the inequality \eqref{pwidB} is strict in general and provides a~certain bound also for the commutator of $\mathbf{B}$ and $\nabla\mathbf{B}$. In order to see this even more explicitly, we can apply Theorem~\ref{TCayley} to the odd function $x\omega(x)$. In the two-dimensional case, for instance, this transforms \eqref{expl} into
\begin{equation}\label{expl2}
	\frac1r\nabla\mathbf{B}^r\cdot\nabla\mathbf{B}=\frac{4}{(r+1)^2}\Big(|\nabla\mathbf{B}^{\frac{r+1}2}|^2+\omega(\delta)^2|\ad_{\log\mathbf{B}}\nabla\mathbf{B}^{\frac{r+1}2}|^2\Big)
\end{equation}
for any $\mathbf{B}\in\mathbb{R}^{2\times2}_{\rm sym}$ is positive definite, and where
\begin{equation*}
	\delta:=\sqrt{\frac14\operatorname{tr}^2\log\mathbf{B}-\det\log\mathbf{B}}.
\end{equation*}

In order to see \eqref{expl}, we use \eqref{PP2} and get
\begin{align*}
	\nabla\mathbf{A}^{2(1-\alpha)}\cdot\nabla\mathbf{A}^{2\alpha}&=\frac{\sinh(2(1-\alpha)\ad_{\log\mathbf{A}})\sinh(2\alpha\ad_{\log\mathbf{A}})}{\sinh^2\ad_{\log\mathbf{A}}}\nabla\mathbf{A}\cdot\nabla\mathbf{A}
\end{align*}
for a~smooth positive definite tensor field $\mathbf{A}:\mathbb{R}^d\to\mathbb{R}^{d\times d}_{\rm sym}$. The choice
\begin{equation*}
	\mathbf{A}=\mathbf{B}^{\frac1{2\alpha}}\quad\text{and}\quad\alpha=\frac1{r+1}
\end{equation*}
leads to
\begin{equation*}
	\nabla\mathbf{B}^r\cdot\nabla\mathbf{B}=\frac{\sinh(r\ad_{\log\mathbf{B}})\sinh\ad_{\log\mathbf{B}}}{\sinh^2(\frac{r+1}2\ad_{\log\mathbf{B}})}\nabla\mathbf{B}^{\frac{r+1}2}\cdot\nabla\mathbf{B}^{\frac{r+1}2}.
\end{equation*}
Observing now that the fraction has a~limit $4r/(r+1)^2$ at zero, it is natural to rewrite it as \eqref{expl} and investigate the sign of $\omega$. In the last term of \eqref{expl} one can further apply Theorem~\ref{Tform} and Lemma~\ref{Had} to rewrite it using a~desired commutator, for instance $\ad_{\log\mathbf{B}}\nabla\log\mathbf{B}$.

More abstractly, instead of \eqref{expl}, one can show, for any nondecreasing absolutely continuous function $f:I\to\mathbb{R}$, where the interval $I$ contains the spectrum of $\mathbf{B}$, that
\begin{equation}
	\nabla f(\mathbf{B})\cdot\nabla\mathbf{B}=\Bigg|\sqrt{\frac{f(\ml_{\mathbf{B}})-f(\mr_{\mathbf{B}})}{\ml_{\mathbf{B}}-\mr_{\mathbf{B}}}}\nabla\mathbf{B}\Bigg|^2.
\end{equation}

\subsection{Inequalities involving matrix functions}

The results of \cite{BathoryOdh} are based on the inequality
\begin{equation}\label{logconv}
	\mathsf{P}(r)\mathbf{X}\cdot\mathsf{P}(-r)\mathbf{X}\geq |\mathsf{P}(0)\mathbf{X}|^2,
\end{equation}
which holds for all $r\in\mathbb{R}$ and $\mathbf{X}\in\mathbb{R}^{d\times d}$, where
\begin{equation}\label{Pdef}
	\mathsf{P}(r)\mathbf{X}:=\int_0^1\mathbf{A}^{(1+r)s}\mathbf{X}\mathbf{A}^{-(1+r)s}ds
\end{equation}
for some positive definite $\mathbf{A}\in\mathbb{R}^{d\times d}_{\rm sym}$.	Using the commutator calculus, we now see that \eqref{Pdef} is tantamount to
\begin{equation*}
	\mathsf{P}(r)=\frac{e^{(1+r)\ell}-1}{(1+r)\ell}\quad\text{with}\quad\ell:=2\ad_{\log\mathbf{A}},
\end{equation*}
and we can actually calculate the error $\mathsf{P}(r)\mathbf{X}\cdot\mathsf{P}(-r)\mathbf{X}-|\mathsf{P}(0)\mathbf{X}|^2$ explicitly, providing an~alternative proof of \eqref{logconv}. In fact, we have
\begin{align*}
	\mathsf{P}(r)\mathbf{X}&\cdot\mathsf{P}(-r)\mathbf{X}-|\mathsf{P}(0)\mathbf{X}|^2\\
	&=\Big(\frac1{1-r^2}\frac{e^{2\ell}-e^{(1-r)\ell}-e^{(1+r)\ell}+1}{\ell^2}-\frac{e^{2\ell}-2e^{\ell}+1}{\ell^2}\Big)\mathbf{X}\cdot\mathbf{X}\\
	&=\frac{2e^{\ell}}{\ell^2}\Big(\frac1{1-r^2}(\cosh\ell-\cosh(r\ell))-(\cosh\ell-1)\Big)\mathbf{X}\cdot\mathbf{X}
\end{align*}
where the expression in parentheses is positive if understood as a~real function. This can be best seen by applying the Taylor series of $\cosh$ to get
\begin{align*}
	\frac1{1-r^2}(\cosh\ell-\cosh(r\ell))-(\cosh\ell-1)&=\sum_{n=1}^{\infty}\frac1{(2n)!}\frac{r^2-r^{2n}}{1-r^2}\ell^{2n}\\
	&=\sum_{n=1}^{\infty}\frac1{(2n+2)!}\sum_{k=1}^{n}r^{2k}\ell^{2n+2}
\end{align*}
and thus
\begin{align*}
	\mathsf{P}(r)\mathbf{X}\cdot\mathsf{P}(-r)\mathbf{X}-|\mathsf{P}(0)\mathbf{X}|^2&=\sum_{n=1}^{\infty}\frac{\sum_{k=1}^{n}r^{2k}}{(2n+2)!}|\ell^{n}e^{\frac{\ell}2}\mathbf{X}|^2\\
	&=\sum_{n=1}^{\infty}\frac{2^{2n}\sum_{k=1}^{n}r^{2k}}{(2n+2)!}|\mathbf{A}(\ad_{\log\mathbf{A}}^n\mathbf{X})\mathbf{A}^{-1}|^2\geq0.
\end{align*}

\subsection{Logarithmic reformulation of the classical viscoelastic fluid models}\label{Sold}

The original motivation for this work is the problem of rewriting the famous Oldroyd-B equation \cite{Oldroyd1950},
\begin{equation}\label{OB}
	\partial_t\mathbf{B}+{\boldsymbol v}\cdot\nabla\mathbf{B}+\frac1{\tau}(\mathbf{B}-\mathbf{I})=(\nabla{\boldsymbol v})\mathbf{B}+\mathbf{B}(\nabla{\boldsymbol v})^{\mathsf{T}},\qquad\tau>0,
\end{equation}
for an~unknown positive definite tensor field $\mathbf{B}$ and a~given velocity field ${\boldsymbol v}$ in terms of the tensor $\mathbf{H}=\frac12\log\mathbf{B}$. This is a~natural idea, since if $\mathbf{B}$ was a~scalar, one would like to divide the equation by $\mathbf{B}$ in a~way that $\mathbf{B}$ disappears from the right-hand side. With the help of the above calculus and especially Corollary~\ref{Cor}, this is now indeed possible if certain terms are hidden in the definition of corotational derivative of $\mathbf{H}$, which in turn leads to the celebrated logarithmic corotational objective derivative; see~\cite{Xiao1997} and further references in~\cite{lograte2025}. The correct way to divide the equation is, of course, to apply the operator $\frac{d\mathbf{H}}{d\mathbf{B}}=\frac12\frac{d\log\mathbf{B}}{d\mathbf{B}}$ to both sides, leading to
\begin{equation}\label{Heq}
	\partial_t\mathbf{H}+{\boldsymbol v}\cdot\nabla\mathbf{H}+\frac1{2\tau}(\mathbf{I}-e^{-2\mathbf{H}})=\frac{d\mathbf{H}}{d\mathbf{B}}(\mathbf{W}\mathbf{B}-\mathbf{B}\mathbf{W})+\frac{d\mathbf{H}}{d\mathbf{B}}(\mathbf{D}\mathbf{B}+\mathbf{B}\mathbf{D}),
\end{equation}
where
\begin{equation*}
	\mathbf{W}:=\frac{\nabla{\boldsymbol v}-(\nabla{\boldsymbol v})^{\mathsf{T}}}2\quad\text{and}\quad\mathbf{D}:=\frac{\nabla{\boldsymbol v}+(\nabla{\boldsymbol v})^{\mathsf{T}}}2.
\end{equation*}
The first term on the right-hand side of \eqref{Heq} can be readily written according to the chain rule \eqref{adf} as
\begin{equation}\label{ft}
	\frac{d\mathbf{H}}{d\mathbf{B}}(\mathbf{W}\mathbf{B}-\mathbf{B}\mathbf{W})=-\frac{d\log\mathbf{B}}{d\mathbf{B}}\ad_{\mathbf{B}}\mathbf{W}=-\ad_{\log\mathbf{B}}\mathbf{W}=-2\ad_{\mathbf{H}}\mathbf{W}.
\end{equation}
Most importantly, however, the last term in \eqref{Heq} can be dealt with by \eqref{P2} as follows
\begin{equation}\label{st}
	\frac{d\mathbf{H}}{d\mathbf{B}}(\mathbf{D}\mathbf{B}+\mathbf{B}\mathbf{D})=\ad_{\log\mathbf{B}}\coth\ad_{\log\mathbf{B}}\mathbf{D}=2\ad_{\mathbf{H}}\coth(2\ad_{\mathbf{H}})\mathbf{D}
\end{equation}
Using \eqref{ft} and \eqref{st} in \eqref{Heq}, expanding $2x\coth(2x)=1+2x(\coth(2x)-1/2x)$ and rearranging, we finally arrive at
\begin{equation}\label{Heq2}
	\partial_t\mathbf{H}+{\boldsymbol v}\cdot\nabla\mathbf{H}+2\ad_{\mathbf{H}}{\boldsymbol\Omega}^{\log}+\frac1{2\tau}(\mathbf{I}-e^{-2\mathbf{H}})=\mathbf{D},
\end{equation}
where
\begin{equation}\label{logspin}
	{\boldsymbol\Omega}^{\log}:=\mathbf{W}-\mathcal{L}(2\ad_{\mathbf{H}})\mathbf{D}
\end{equation}
and
\begin{equation}
	\mathcal{L}(x):=\Big\{\begin{matrix}\coth x-\frac1x&x\neq0,\\0&x=0\end{matrix}
\end{equation}
is the Langevin function. Note that $\mathcal{L}$ is continuous and odd, hence the tensor ${\boldsymbol\Omega}^{\log}$ is well defined by \eqref{logspin} (appealing to Definition~\ref{Def}) and antisymmetric due to Theorem~\ref{Lrep}~(v).

It turns out that the tensor ${\boldsymbol\Omega}^{\log}$ obtained in this way coincides with the logarithmic spin tensor of Xiao~et.~al., see \cite{Xiao1997}, where it has been used primarily in models of generalized elasticity; therefore, we use the same notation. A~detailed study of this connection, using the classical power series approach, can be found in our related work \cite{lograte2025}. In reality, it is easy to check that all \emph{material} spin tensors $\boldsymbol{\Omega}$ (as defined in \cite{Xiao_1998}) are precisely of the form
\begin{equation}\label{spingen}
	\boldsymbol{\Omega}=\mathbf{W}-f(\ad_{\mathbf{H}})\mathbf{D},\qquad f\text{ continuous and odd.}
\end{equation}
This observation is in line with the findings of \cite{Meng2022} and \cite{Norris2008} where different basis-free formulae are found for various spin tensors. The general formula \eqref{spingen} refines these results by identifying the fourth-order tensor applied to $\mathbf{D}$ as $f(\ad_{\mathbf{H}})$, which can be further evaluated explicitly in the selected dimension using Theorem~\ref{TCayley}. For the exact correspondence between some of the most important physical spin tensors and the choice of the function $f$ in \eqref{spingen}, see the examples in Sec.~4 of \cite{Xiao_1998} (and apply the transformation $f(x)=-\tilde{h}(e^{2x})$ therein).

Let us return to the Oldroyd viscoelastic model \eqref{OB} and its equivalent logarithmic version \eqref{Heq2}. The first three terms of \eqref{Heq2} give rise to the logarithmic corotational derivative (logarithmic rate)
\begin{equation}\label{lograte}
	\accentset{\boldsymbol\circ}{\mathbf{H}}^{\log}:=\partial_t\mathbf{H}+{\boldsymbol v}\cdot\nabla\mathbf{H}+2\ad_{\mathbf{H}}{\boldsymbol\Omega}^{\log},
\end{equation}
turning \eqref{Heq2} into
\begin{equation}\label{Hlog}
	\accentset{\boldsymbol\circ}{\mathbf{H}}^{\log}+\frac1{2\tau}(\mathbf{I}-e^{-2\mathbf{H}})=\mathbf{D},
\end{equation}
which is the true logarithmic reformulation of the original model
\begin{equation}\label{old}
	\accentset{\boldsymbol\nabla}{\mathbf{B}}+\frac1{\tau}(\mathbf{B}-\mathbf{I})={\boldsymbol0}
\end{equation}
due to Oldroyd, where
\begin{equation}\label{upc}
	\accentset{\boldsymbol\nabla}{\mathbf{B}}:=\partial_t\mathbf{B}+{\boldsymbol v}\cdot\nabla\mathbf{B}-(\nabla{\boldsymbol v})\mathbf{B}-\mathbf{B}(\nabla{\boldsymbol v})^{\mathsf{T}}
\end{equation}
is the upper-convected derivative.

The linearization of the second term of \eqref{Hlog} leads to the simpler model
\begin{equation}\label{Hloglin}
	\accentset{\boldsymbol\circ}{\mathbf{H}}^{\log}+\frac1{\tau}\mathbf{H}=\mathbf{D},
\end{equation}
which, after applying the backward transformation $\frac{d\mathbf{B}}{d\mathbf{H}}=\frac{de^{2\mathbf{H}}}{d\mathbf{H}}$, can be seen to be equivalent to the model 
\begin{equation}\label{giu}
	\accentset{\boldsymbol\nabla}{\mathbf{B}}+\frac1{\tau}\mathbf{B}\log\mathbf{B}={\boldsymbol0}
\end{equation}
proposed in \cite[(8)]{Alrashdi2024} using different arguments. Since
\begin{equation*}
    \mathbf{B}\log\mathbf{B}=\frac12(\mathbf{B}-\mathbf{I})+\frac12(\mathbf{B}^2-\mathbf{B})+O(|\mathbf{B}-\mathbf{I}|^3)\quad\text{as}\quad|\mathbf{B}-\mathbf{I}|\to0,
\end{equation*}
one can see \eqref{giu} (and thus also \eqref{Hloglin}) as a certain interpolation between the Oldroyd and Giesekus viscoelastic models. One can also compare \eqref{Hlog} or \eqref{Hloglin} with~\cite[(C1)]{Giusteri_2024}, where it is now clear that one would have to take $\mathsf{S}={\boldsymbol0}$ and $\mathsf{\Omega}=\mathbf{W}-\coth(2\ad_{\log\mathbf{B}})$, that is, \emph{singular} Eulerian twirl tensor. Moreover, there are indications (cf.~\cite{Masmoudi2011}), that if \eqref{Hloglin} is coupled to the Navier-Stokes equations in a~natural way (as in \cite{Giusteri_2024}), then the resulting system of partial differential equations admits possibly even a~three-dimensional global weak solution for large initial data. We shall investigate this in detail elsewhere.

\subsection{Connecting the logarithmic and upper-convected rates}

Computations of the previous subsection in fact reveal an~explicit general relation between the upper-convected and logarithmic rates.

\begin{corollary}
	Let $\mathbf{A}$ and $\mathbf{G}$ be symmetric tensor fields such that $\mathbf{A}=e^{\mathbf{G}}$ and let the rates $\accentset{\boldsymbol\circ}{\mathbf{G}}^{\log}$ and $\accentset{\boldsymbol\nabla}{\mathbf{A}}$ be defined as in \eqref{lograte} and \eqref{upc}, respectively, with respect to a~given vector field ${\boldsymbol v}$. Then
	\begin{equation}\label{r1}
		\accentset{\boldsymbol\circ}{\mathbf{G}}^{\log}=2\mathbf{D}+\frac{d\log\mathbf{A}}{d\mathbf{A}}\accentset{\boldsymbol\nabla}{\mathbf{A}}
	\end{equation}
	and, equivalently
	\begin{equation}\label{r2}
		\accentset{\boldsymbol\nabla}{\mathbf{A}}=\frac{de^{\mathbf{G}}}{d\mathbf{G}}(\accentset{\boldsymbol\circ}{\mathbf{G}}^{\log}-2\mathbf{D}).
	\end{equation}
\end{corollary}

The relation \eqref{r1} can be viewed from two interesting angles. Firstly, the fundamental equation of solid mechanics $\accentset{\boldsymbol\circ}{\mathbf{H}}^{\log}=\mathbf{D}$ for Hencky strain $\mathbf{H}$ is just a~special case of \eqref{r1} if $\mathbf{G}=2\mathbf{H}$ and $\mathbf{A}=\mathbf{B}$ since the left Cauchy--Green tensor satisfies $\accentset{\boldsymbol\nabla}{\mathbf{B}}={\boldsymbol0}$ by definition. In particular, the relation \eqref{r1} \emph{directly} shows that $\accentset{\boldsymbol\nabla}{\mathbf{B}}={\boldsymbol0}$ is equivalent to $\accentset{\boldsymbol\circ}{\mathbf{H}}^{\log}=\mathbf{D}$.

Secondly, the relation \eqref{r1} provides an~alternative way to define the logarithmic rate $\accentset{\boldsymbol\circ}{\mathbf{G}}^{\log}$ even outside the realm of solid mechanics, where it traditionally relies on the definition of the left Cauchy--Green tensor~$\mathbf{B}$ and its spectral decomposition. Unlike that, since the upper-convected derivative $\accentset{\boldsymbol\nabla}{\mathbf{A}}$ can be directly evaluated from \eqref{upc} for any tensor $\mathbf{A}$ irrespective of its physical meaning, identity \eqref{r1}, written more succinctly as
\begin{equation}\label{r11}
	\accentset{\boldsymbol\circ}{\mathbf{G}}^{\log}=2\mathbf{D}+\Big(\frac{d\log\mathbf{A}}{d\mathbf{A}}\Big)_{\mathbf{A}=e^{\mathbf{G}}}\accentset{\boldsymbol\nabla}{(e^{\mathbf{G}})},
\end{equation}
requires only the symbolic computation of the derivative of the matrix logarithm. In~view of \eqref{ft} and \eqref{st}, the identity \eqref{r11} is consistent with \eqref{lograte} and justifies the choice \eqref{logspin} of logarithmic spin dependent on $\mathbf{H}$ regardless of the constraints of solid mechanics. Otherwise, hypothetically, one could replace $\mathbf{H}$ in \eqref{logspin} with some fixed tensor $\mathbf{H}_0$, where the theory of solids corresponds to the choice $\mathbf{H}_0=\frac12\log\mathbf{B}$.

Analogous relations to \eqref{r1} and \eqref{r2} can, of course, be deduced for other types of objective derivative used in various physics applications; see \cite[(1.13), (1.16)]{Neff_2025} for further examples. We also refer to \cite{Aubram}, \cite{Fiala_2019}, \cite{Neff_2025}, \cite{Vejvoda_2025} and references therein for the state-of-the-art developments and applications of objective rates and logarithmic strain in continuum mechanics.

\subsection{Comparing dissipation potentials}

When equation \eqref{old} is supplied with the diffusion term $-\Delta\mathbf{B}$, it corresponds to the additional term in the dissipation of the form
\begin{equation*}
	|\mathbf{B}^{-\frac12}\nabla\mathbf{B}\mathbf{B}^{-\frac12}|^2=\Delta\mathbf{B}\cdot\mathbf{B}^{-1}-\Delta\log\det\mathbf{B},
\end{equation*}
see, e.g., \cite{Bathory_2020} for details. This term is roughly of the same order as $|\nabla\log\mathbf{B}|^2$, that is, $|\nabla\mathbf{H}|^2$, but the precise relationship between the two has not been clear. Now, simply using \eqref{al1}, \eqref{E4} and the chain rule gives
\begin{equation*}
	\mathbf{B}^{-\frac12}\nabla\mathbf{B}\mathbf{B}^{-\frac12}=e^{-\ac_{2\mathbf{H}}}\frac{de^{2\mathbf{H}}}{d\mathbf{H}}\nabla\mathbf{H}=\frac{\sinh(2\ad_{\mathbf{H}})}{\ad_{\mathbf{H}}}\nabla\mathbf{H}.
\end{equation*}
Thus, taking advantage of the symmetry from Theorem~\ref{Lrep}~(vi) and expanding the function $\frac{\sinh^2(2x)}{x^2}$ into its Taylor series leads to
\begin{align}
	|\mathbf{B}^{-\frac12}\nabla\mathbf{B}\mathbf{B}^{-\frac12}|^2&=|\nabla\mathbf{H}|^2+\sum_{n=1}^{\infty}\frac{2}{(2n+2)!}|\ad_{\mathbf{H}}^n\nabla\mathbf{H}|^2\nonumber\\
	&=|\nabla\mathbf{H}|^2+\frac1{12}|\ad_{\mathbf{H}}\nabla\mathbf{H}|^2+\frac1{360}|\ad_{\mathbf{H}}^2\nabla\mathbf{H}|^2+\ldots,
\end{align}
indicating that the simplest diffusion term in the formulation with $\mathbf{B}$ has generally a~stronger dissipative effect than its counterpart in the logarithmic formulation using $\mathbf{H}$.

Similar ideas can be used to compare the expressions of type $|\mathbf{B}^p\nabla\mathbf{B}^r\mathbf{B}^q|^2$ for different values of exponents $p,q,r\in\mathbb{R}$, among each other.

\section{Proofs}\label{Sproof}

\subsection{Functions applied to the commutator}

We remind the reader that by $[m_{ij}]$, we denote a~$d\times d$ matrix with elements $m_{ij}$, $i,j=1,\ldots,d$.

\begin{proof}[Proof of Theorem~\ref{Lrep}]
	(ii) If $\mathbf{G}$ and $\mathbf{H}$ commute, then there exists a~Schur diagonalization of both $\mathbf{G}$ and $\mathbf{H}$ with the common matrix $\mathbf{Q}$, see \cite{HornHong1985}. Let $\{g_i\}_{i=1}^d$ and $\{h_i\}_{i=1}^d$ be the corresponding eigenvalues. Then, we just expand Definition~\ref{Def}, apply $\mathbf{Q}^{\mathsf{T}}\mathbf{Q}=\mathbf{I}$ and use the commutativity and associativity of the $\odot$ product:
	\begin{align*}
		f_1(\ml_{\mathbf{G}},\mr_{\mathbf{G}})(f_2(\ml_{\mathbf{H}},\mr_{\mathbf{H}})\mathbf{X})&=\mathbf{Q}\big([f_1(g_i,g_j)]\odot[f_2(h_i,h_j)]\odot(\mathbf{Q}^{\mathsf{T}}\mathbf{X}\mathbf{Q})\big)\mathbf{Q}^{\mathsf{T}}\\
		&=\mathbf{Q}\big([f_2(h_i,h_j)]\odot [f_1(g_i,g_j)]\odot(\mathbf{Q}^{\mathsf{T}}\mathbf{X}\mathbf{Q})\big)\mathbf{Q}^{\mathsf{T}}\\
		&=f_2(\ml_{\mathbf{H}},\mr_{\mathbf{H}})(f_1(\ml_{\mathbf{G}},\mr_{\mathbf{G}})\mathbf{X})
	\end{align*}
	
	(iii) These properties are merely consequences of \eqref{had} combined with
	\begin{equation*}
		[(f_1+f_2)(g_i,g_j)]=[f_1(g_i,g_j)]+[f_2(g_i,g_j)]
	\end{equation*}
	and
	\begin{equation*}
		[(f_1f_2)(g_i,g_j)]=[f_1(g_i,g_j)]\odot [f_2(g_i,g_j)],
	\end{equation*}
	respectively.
	
	(i) First of all, let us verify the consistency of \eqref{had} in the simplest of cases. If $f(x,y)=1$, then indeed
	\begin{equation}\label{id1}
		\mathbf{Q}\big([1]\odot(\mathbf{Q}^{\mathsf{T}}\mathbf{X}\mathbf{Q})\big)\mathbf{Q}^{\mathsf{T}}=\mathbf{Q}(\mathbf{Q}^{\mathsf{T}}\mathbf{X}\mathbf{Q})\mathbf{Q}^{\mathsf{T}}=\mathbf{X}
	\end{equation}
	defines an~identity element in $\mathcal{L}(\mathbb{R}^{d\times d})$. Next, if $f(x,y)=x$, then
	\begin{equation}\label{idx}
		\mathbf{Q}\big([g_i]\odot(\mathbf{Q}^{\mathsf{T}}\mathbf{X}\mathbf{Q})\big)\mathbf{Q}^{\mathsf{T}}=\mathbf{Q}(\mathbf{g}\mathbf{Q}^{\mathsf{T}}\mathbf{X}\mathbf{Q})\mathbf{Q}^{\mathsf{T}}=\mathbf{G}\mathbf{X}=\ml_{\mathbf{G}}\mathbf{X},
	\end{equation}
	which represents the matrix multiplication from the left by $\mathbf{G}$. Analogously, if $f(x,y)=y$, then
	\begin{equation}\label{idy}
		\mathbf{Q}\big([g_j]\odot(\mathbf{Q}^{\mathsf{T}}\mathbf{X}\mathbf{Q})\big)\mathbf{Q}^{\mathsf{T}}=\mathbf{Q}(\mathbf{Q}^{\mathsf{T}}\mathbf{X}\mathbf{Q}\mathbf{g})\mathbf{Q}^{\mathsf{T}}=\mathbf{X}\mathbf{G}=\mr_{\mathbf{G}}\mathbf{X},
	\end{equation}
	retrieves the matrix multiplication by $\mathbf{G}$ from the right. From \eqref{id1}--\eqref{idy}, (ii) and (iii), we infer that
	\begin{equation*}
		p(\ml_{\mathbf{G}},\mr_{\mathbf{G}})=\Big(\sum_{m=0}^M\sum_{n=0}^Np_{mn}x^my^n\Big)(\ml_{\mathbf{G}},\mr_{\mathbf{G}})=\sum_{m=0}^M\sum_{n=0}^Np_{mn}\ml_{\mathbf{G}}^m\mr_{\mathbf{G}}^n,
	\end{equation*}
	which indeed coincides with the usual interpretation of the symbol $p(\ml_{\mathbf{G}},\mr_{\mathbf{G}})$.
	
	(iv) Given any $\mathbf{Y}\in\mathbb{R}^{d\times d}$, it is evident from (iii) that
	\begin{equation*}
		\mathbf{X}=\mathbf{Q}\Big(\Big[\frac1{f(g_i,g_j)}\Big]\odot(\mathbf{Q}^{\mathsf{T}}\mathbf{Y}\mathbf{Q})\Big)\mathbf{Q}^{\mathsf{T}}=(1/f)(\ml_{\mathbf{G}},\mr_{\mathbf{G}})\mathbf{Y}
	\end{equation*}
	is the unique element $\mathbf{X}$ satisfying $f(\ml_{\mathbf{G}},\mr_{\mathbf{G}})\mathbf{X}=\mathbf{Y}$.
	
	(v) This is an~easy consequence of the fact that
	\begin{equation*}
		[f(g_i,g_j)]^{\mathsf{T}}=[f(g_j,g_i)].
	\end{equation*}
	
	(vi) This property follows simply from the computation
	\begin{align*}
		f(\ml_{\mathbf{G}},\mr_{\mathbf{G}})\mathbf{X}\cdot\mathbf{Y}&=\big([f(g_i,g_j)]\odot(\mathbf{Q}^{\mathsf{T}}\mathbf{X}\mathbf{Q})\big)\cdot\mathbf{Q}^{\mathsf{T}}\mathbf{Y}\mathbf{Q}\\
		&=\mathbf{Q}^{\mathsf{T}}\mathbf{X}\mathbf{Q}\cdot\big([f(g_i,g_j)]\odot(\mathbf{Q}^{\mathsf{T}}\mathbf{Y}\mathbf{Q})\big)\\
		&=\mathbf{X}\cdot f(\ml_{\mathbf{G}},\mr_{\mathbf{G}})\mathbf{Y}.
	\end{align*}
	
	(vii) Property \eqref{normf} is a~direct consequence of \eqref{had}, $|\mathbf{Q}|=1$ and the inequality
	\begin{equation*}
		|\mathbf{A}\odot\mathbf{B}|\leq|\mathbf{A}||\mathbf{B}|,\quad\mathbf{A},\mathbf{B}\in\mathbb{R}^{d\times d},
	\end{equation*}
	for which we refer to \cite{HornJohnson1987}.
	
	(viii) Since the determinant of the Vandermonde matrix satisfies
	\begin{equation*}
		\det\mathbf{V}_{\mathbf{G}}=\prod_{1\leq k<l\leq d}(g_l-g_k),
	\end{equation*}
	the linear problem \eqref{lp} admits a~unique solution if all the eigenvalues of $\mathbf{G}$ are distinct. In case of eigenvalue multiplicities, duplicate entries can be eliminated in $\mathbf{V}_{\mathbf{G}}$ and $[f(g_i,g_j)]$, leading to an~underdetermined problem for $\mathbf{J}_{\mathbf{G}}^f$ that can be solved using generalized matrix inverse methods.
	
	In order to prove \eqref{repreq}, we again exploit that $\mathbf{g}=\mathbf{Q}^{\mathsf{T}}\mathbf{G}\mathbf{Q}$ is diagonal and write
	\begin{align*}
		\sum_{p=1}^d\sum_{r=1}^d(\mathbf{J}_{\mathbf{G}}^f)_{pr}\mathbf{G}^{p-1}\mathbf{X}\mathbf{G}^{r-1}&=\mathbf{Q}\Big(\sum_{p=1}^d\sum_{r=1}^d(\mathbf{J}_{\mathbf{G}}^f)_{pr}\mathbf{g}^{p-1}\mathbf{Q}^{\mathsf{T}}\mathbf{X}\mathbf{Q}\mathbf{g}^{r-1}\Big)\mathbf{Q}^{\mathsf{T}}\\
		&=\mathbf{Q}\Big[\sum_{p=1}^d\sum_{r=1}^dg_i^{p-1}(\mathbf{J}_{\mathbf{G}}^f)_{pr}g_j^{r-1}(\mathbf{Q}^{\mathsf{T}}\mathbf{X}\mathbf{Q})_{ij}\Big]\mathbf{Q}^{\mathsf{T}}\\
		&=\mathbf{Q}\big((\mathbf{V}_{\mathbf{G}}\mathbf{J}_{\mathbf{G}}^f\mathbf{V}_{\mathbf{G}}^{\mathsf{T}})\odot(\mathbf{Q}^{\mathsf{T}}\mathbf{X}\mathbf{Q})\big)\mathbf{Q}^{\mathsf{T}}\\
		&=\mathbf{Q}\big([f(g_i,g_j)]\odot(\mathbf{Q}^{\mathsf{T}}\mathbf{X}\mathbf{Q})\big)\mathbf{Q}^{\mathsf{T}}\\
		&=f(\ml_{\mathbf{G}},\mr_{\mathbf{G}})\mathbf{X}.
	\end{align*}
\end{proof}

\begin{proof}[Proof of Theorem~\ref{TCayley}]
	(i) If $d=1$, then $\ad_{\mathbf{G}}=\mathsf{0}$, leading to \eqref{cayley1d}. 
	
	Else, let us denote the right-hand sides of \eqref{cayley2d} and \eqref{cayley3d} by $\mathsf{RHS}_{\mathbf{G}}\mathbf{X}$. It is enough to show that \eqref{cayley2d} and \eqref{cayley3d} hold in the diagonal case, i.e., that
	\begin{equation*}
		f(\ad_{\mathbf{g}})\mathbf{Y}=\mathsf{RHS}_{\mathbf{g}}\mathbf{Y}\quad\text{for all}\quad\mathbf{Y}\in\mathbb{R}^{d\times d}
	\end{equation*}
	if $\mathbf{g}=\operatorname{diag}(g_i)_{i=1}^d$. Indeed, the general case where $\mathbf{G}=\mathbf{Q}\mathbf{g}\mathbf{Q}^{\mathsf{T}}$ then follows by observing that
	\begin{align*}
		f(\ad_{\mathbf{G}})\mathbf{X}&=\mathbf{Q}\Big(\Big[f\Big(\frac{g_i-g_j}2\Big)\Big]\odot(\mathbf{Q}^{\mathsf{T}}\mathbf{X}\mathbf{Q})\Big)\mathbf{Q}^{\mathsf{T}}\\
		&=\mathbf{Q}\big(f(\ad_{\mathbf{g}})(\mathbf{Q}^{\mathsf{T}}\mathbf{X}\mathbf{Q})\big)\mathbf{Q}^{\mathsf{T}}\\
		&=\mathbf{Q}\big(\mathsf{RHS}_{\mathbf{g}}(\mathbf{Q}^{\mathsf{T}}\mathbf{X}\mathbf{Q})\big)\mathbf{Q}^{\mathsf{T}}\\
		&=\mathsf{RHS}_{\mathbf{G}}\mathbf{X}.
	\end{align*}
	Hence, for the rest of the proof, we shall assume $\mathbf{G}=\mathbf{g}$.
	
	(ii) Let us define auxiliary numbers
	\begin{equation*}
		g_{mn}^{ij}:=(g_i-g_m)(g_j-g_n)+(g_i-g_n)(g_j-g_m),\quad i,j,m,n=1,\ldots,d.
	\end{equation*}
	Recall that for $\mathbf{G}$ diagonal, for any $m$ and $n$, there holds
	\begin{align*}
		(\mathbf{G}^m\mathbf{X}\mathbf{G}^n)_{ij}=g_i^mg_j^n\mathbf{X}_{ij},\quad i,j=1,\ldots,d.
	\end{align*}
	Therefore, if $d=2$, we have
	\begin{align}
		&\big(-2g_1g_2\mathbf{X}+(g_1+g_2)(\mathbf{G}\mathbf{X}+\mathbf{X}\mathbf{G})-2\mathbf{G}\mathbf{X}\mathbf{G}\big)_{ij}\nonumber\\
		&\quad=(-2g_1g_2+(g_1+g_2)(g_i+g_j)-2g_ig_j)\mathbf{X}_{ij}\nonumber\\
		&\quad=(-2g_1g_2+g_1g_i+g_1g_j+g_2g_i+g_2g_j-2g_ig_j)\mathbf{X}_{ij}\nonumber\\
		&\quad=-g^{ij}_{12}\mathbf{X}_{ij}\nonumber\\
		&\quad=(g_1-g_2)^2\left[\begin{matrix}
			0&1\\1&0
		\end{matrix}\right]_{ij}\mathbf{X}_{ij}\label{pr2d}
	\end{align}
	and
	\begin{equation}\label{pr2d2}
		(\mathbf{G}\mathbf{X}-\mathbf{X}\mathbf{G})_{ij}=(g_i-g_j)\mathbf{X}_{ij}=(g_1-g_2)\left[\begin{matrix}0&1\\-1&0\end{matrix}\right]_{ij}\mathbf{X}_{ij}
	\end{equation}
	for each $i,j\in\{1,2\}$. Using the last two relations, we find
	\begin{align*}
		(\mathsf{RHS}_{\mathbf{G}}\mathbf{X})_{ij}&=\left[\begin{matrix}
			f_0&\!\!\!\!\!\!\!\!\!\!\!\!f_0+f_{\rm odd}(\frac{g_1-g_2}2)+f_{\rm even}(\frac{g_1-g_2}2)\\
			f_0-f_{\rm odd}(\frac{g_1-g_2}2)+f_{\rm even}(\frac{g_1-g_2}2)&\!\!\!\!\!\!f_0
		\end{matrix}\right]_{ij}\mathbf{X}_{ij}\\
		&=\left[\begin{matrix}
			f_0&f(\frac{g_1-g_2}2)\\
			f(\frac{g_2-g_1}2)&f_0
		\end{matrix}\right]_{ij}\mathbf{X}_{ij}\\
		&=\big(f(\ad_{\mathbf{G}})\mathbf{X}\big)_{ij}.
	\end{align*}	
	
	(iii) In the three-dimensional case, for any $i,j\in\{1,2,3\}$, we further define the vectors
	\begin{equation*}
		\mathbf{w}^{ij}:=\big((g_i-g_1)(g_j-g_1)g^{ij}_{23},(g_i-g_2)(g_j-g_2)g^{ij}_{31},(g_i-g_3)(g_j-g_3)g^{ij}_{12}\big)
	\end{equation*}
	and we compute that
	\begin{align*}
		\mathbf{w}_1^{ij}&=(g_i-g_1)(g_j-g_1)((g_i-g_2)(g_j-g_3)+(g_i-g_3)(g_j-g_2))\\
		&=(g_1^2-g_1(g_i+g_j)+g_ig_j)(2g_2g_3-(g_2+g_3)(g_i+g_j)+2g_ig_j)\\
		&=2g_1^2g_2g_3-(g_1^2(g_2+g_3)+2g_1g_2g_3)(g_i+g_j)+2(g_1^2+g_2g_3)g_ig_j\\
		&\qquad+g_1(g_2+g_3)(g_i+g_j)^2-(2g_1+g_2+g_3)g_ig_j(g_i+g_j)+2g_i^2g_j^2\\
		&=2J_3g_1-(J_2g_1+J_3)(g_i+g_j)+2(g_1^2+J_2)g_ig_j\\
		&\qquad+(g_1J_1-g_1^2)(g_i^2+g_j^2)-(g_1+J_1)g_ig_j(g_i+g_j)+2g_i^2g_j^2.
	\end{align*}
	For the other indices, one can proceed completely analogously, leading to the identity
	\begin{align}
		\mathbf{w}_k^{ij}&=2J_3g_k-(J_2g_k+J_3)(g_i+g_j)+2(g_k^2+J_2)g_ig_j+(g_kJ_1-g_k^2)(g_i^2+g_j^2)\nonumber\\
        &\qquad-(g_k+J_1)g_ig_j(g_i+g_j)+2g_i^2g_j^2\label{Gk}
	\end{align}
	for any $i,j,k\in\{1,2,3\}$. Now we are ready to evaluate the terms on the right-hand side of \eqref{cayley3d}. For the part depending on $f_{\rm odd}$, we have
	\begin{align*}
		\big(-K_2&(\mathbf{G}\mathbf{X}-\mathbf{X}\mathbf{G})+K_1(\mathbf{G}^2\mathbf{X}-\mathbf{X}\mathbf{G}^2)-K_0(\mathbf{G}^2\mathbf{X}\mathbf{G}-\mathbf{G}\mathbf{X}\mathbf{G}^2)\big)_{ij}\\
		&=(g_i-g_j)\big(-K_2+(g_i+g_j)K_1-g_ig_jK_0\big)\\
		&=\frac{(g_i-g_j)\left(\!\!\!\!\!\!\!\!\!\!\!\!\!\!\begin{matrix}(g_1-g_j)(g_i-g_1)f_{\rm odd}(\frac{g_2-g_3}2)\\\qquad+(g_2-g_j)(g_i-g_2)f_{\rm odd}(\frac{g_3-g_1}2)\\\qquad\qquad+(g_3-g_j)(g_i-g_3)f_{\rm odd}(\frac{g_1-g_2}2)\end{matrix}\right)}{(g_1-g_2)(g_2-g_3)(g_3-g_1)}\mathbf{X}_{ij}\\
		&=\left[\begin{matrix}
			0&f_{\rm odd}(\frac{g_1-g_2}2)&-f_{\rm odd}(\frac{g_3-g_1}2)\\
			-f_{\rm odd}(\frac{g_1-g_2}2)&0&f_{\rm odd}(\frac{g_2-g_3}2)\\
			f_{\rm odd}(\frac{g_3-g_1}2)&-f_{\rm odd}(\frac{g_2-g_3}2)&0
		\end{matrix}\right]_{ij}\mathbf{X}_{ij},
	\end{align*}
	while for the other part, we apply \eqref{Gk} to get
	\begin{align*}
		&\big(2J_3L_1\,\mathbf{X}-(J_2L_1+J_3L_0)(\mathbf{G}\mathbf{X}+\mathbf{X}\mathbf{G})\\
		&\qquad+2(L_2+J_2L_0)\,\mathbf{G}\mathbf{X}\mathbf{G}+(J_1L_1-L_2)(\mathbf{G}^2\mathbf{X}+\mathbf{X}\mathbf{G}^2)\\
		&\qquad\qquad-(L_1+J_1L_0)(\mathbf{G}^2\mathbf{X}\mathbf{G}+\mathbf{G}\mathbf{X}\mathbf{G}^2)+2L_0\,\mathbf{G}^2\mathbf{X}\mathbf{G}^2\big)_{ij}\\
		&\qquad=\big(2J_3L_1-(J_2L_1+J_3L_0)(g_i+g_j)+2(L_2+J_2L_0)g_ig_j\\
		&\qquad\qquad+(J_1L_1-L_2)(g_i^2+g_j^2)-(L_1+J_1L_0)g_ig_j(g_i+g_j)+2L_0g_i^2g_j^2\big)\mathbf{X}_{ij}\\
		&\qquad=\frac{\cfrac{f_{\rm even}(\frac{g_1-g_2}2)}{g_1-g_2}\mathbf{w}_3^{ij}+\cfrac{f_{\rm even}(\frac{g_2-g_3}2)}{g_2-g_3}\mathbf{w}_1^{ij}+\cfrac{f_{\rm even}(\frac{g_3-g_1}2)}{g_3-g_1}\mathbf{w}_2^{ij}}{(g_1-g_2)(g_2-g_3)(g_3-g_1)}\mathbf{X}_{ij}\\
		&\qquad=\left[\begin{matrix}
			0&f_{\rm even}(\frac{g_1-g_2}2)&f_{\rm even}(\frac{g_3-g_1}2)\\
			f_{\rm even}(\frac{g_1-g_2}2)&0&f_{\rm even}(\frac{g_2-g_3}2)\\
			f_{\rm even}(\frac{g_3-g_1}2)&f_{\rm even}(\frac{g_2-g_3}2)&0
		\end{matrix}\right]_{ij}\mathbf{X}_{ij}
	\end{align*}
	for all $i,j\in\{1,2,3\}$. Hence, by summing the last two identities and adding also the part $f_0\mathbf{X}$, we arrive at \eqref{cayley3d}.
	
	Regarding the eigenvalue multiplicities, the case of a~single distinct eigenvalue always trivially leads to \eqref{cayley1d}. Next, suppose that $d=3$ and $g_1\neq g_2=g_3$. We proceed just as in \eqref{pr2d} and \eqref{pr2d2} with the difference that $\mathbf{G}$ and $\mathbf{X}$ are now of the size $3\times 3$. We also note that for the additional elements, we have
	\begin{equation*}
		g_{12}^{13}=(g_1-g_2)(g_3-g_1)=-(g_1-g_2)^2,\qquad g_{12}^{23}=0,\qquad g_{12}^{33}=0.
	\end{equation*}
	From this, we obtain
	\begin{equation*}
		\big(-2g_1g_2\mathbf{X}+(g_1+g_2)(\mathbf{G}\mathbf{X}+\mathbf{X}\mathbf{G})-2\mathbf{G}\mathbf{X}\mathbf{G}\big)_{ij}=-g_{12}^{ij}\mathbf{X}_{ij}=(g_1-g_2)^2\left[\begin{matrix}
			0&1&1\\1&0&0\\1&0&0
		\end{matrix}\right]_{ij}\mathbf{X}_{ij}
	\end{equation*}
	and
	\begin{equation*}
		(\mathbf{G}\mathbf{X}-\mathbf{X}\mathbf{G})_{ij}=(g_i-g_j)\mathbf{X}_{ij}=(g_1-g_2)\left[\begin{matrix}0&1&1\\-1&0&0\\-1&0&0\end{matrix}\right]_{ij}\mathbf{X}_{ij},
	\end{equation*}
	for each $i,j\in\{1,2,3\}$, which again leads to the required result. The other two cases $g_2\neq g_3=g_1$ and $g_3\neq g_1=g_2$ are analogous.
\end{proof}

\subsection{Symbolic differentiation of matrix functions}

\begin{proof}[Proof of Theorem~\ref{Tcons}]
	(A) Since $f\in\mathcal{C}^1(I;\mathbb{R})$, by the fundamental theorem of calculus, there holds
	\begin{equation*}
		\int_0^1f'\big((1-s)g_i+sg_j\big)ds=\Big\{\begin{matrix}\frac{f(g_i)-f(g_j)}{g_i-g_j}&g_i\neq g_j\\f'(g_i)&\text{else}\end{matrix}\Big\}=\lim_{(x,y)\to(g_i,g_j)}\frac{f(x)-f(y)}{x-y}.
	\end{equation*}
	Thus, following Definition~\ref{Def} for $f'((1-s)x+sy)$ and then for $\frac{f(x)-f(y)}{x-y}$, we indeed have	
	\begin{align*}
		\frac{df(\mathbf{G})}{d\mathbf{G}}&=\int_0^1f'\big((1-s)\ml_{\mathbf{G}}+\,s\mr_{\mathbf{G}}\big)ds\,\mathbf{X}\\
		&=\mathbf{Q}\Big(\Big[\int_0^1f'\big((1-s)g_i+sg_j\big)ds\Big]\odot(\mathbf{Q}^{\mathsf{T}}\mathbf{X}\mathbf{Q})\Big)\mathbf{Q}^{\mathsf{T}}\\
		&=\mathbf{Q}\Big(\Big[\lim_{(x,y)\to(g_i,g_j)}\frac{f(x)-f(y)}{x-y}\Big]\odot(\mathbf{Q}^{\mathsf{T}}\mathbf{X}\mathbf{Q})\Big)\mathbf{Q}^{\mathsf{T}}\\
		&=\frac{f(\ml_{\mathbf{G}})-f(\mr_{\mathbf{G}})}{\ml_{\mathbf{G}}-\mr_{\mathbf{G}}}.
	\end{align*}
	
	(B) Using the well-known identity for the beta function (\cite[Sect.~6.2]{Owen_1965})
	\begin{equation*}
		\int_0^1(1-s)^ks^{n-1-k}ds=\frac{k!(n-1-k)!}{n!}=\frac1{n\binom{n-1}k},\quad 0\leq k<n,
	\end{equation*}
	and the fact that the operators $\ml_{\mathbf{G}}$ and $\mr_{\mathbf{G}}$ commute for any $\mathbf{G}\in\mathbb{C}^{d\times d}$,
	we see that if $f(x)=x^n$, then
	\begin{align}
		\frac{df(\mathbf{G})}{d\mathbf{G}}&=\int_0^1n\big((1-s)\ml_{\mathbf{G}}+\,s\mr_{\mathbf{G}}\big)^{n-1}ds\nonumber\\
		&=\sum_{k=0}^{n-1}n\binom{n-1}k\int_0^1(1-s)^ks^{n-1-k}ds\ml_{\mathbf{G}}^k\mr_{\mathbf{G}}^{n-1-k}\nonumber\\
		&=\sum_{k=0}^{n-1}\ml_{\mathbf{G}}^k\mr_{\mathbf{G}}^{n-1-k}.\label{wh}
	\end{align}
	This indeed agrees with
	\begin{align*}
		\frac{d}{ds}(\mathbf{G}+s\mathbf{X})^n_{s=0}&=\frac{d}{ds}\Big(\mathbf{G}^n+s\sum_{k=0}^{n-1}\mathbf{G}^k\mathbf{X}\mathbf{G}^{n-1-k}+O(s^2)\Big)_{s=0}=\sum_{k=0}^{n-1}\ml_{\mathbf{G}}^k\mr_{\mathbf{G}}^{n-1-k}\mathbf{X}.
	\end{align*}
	Hence, the case $f(x)=\sum_{n=0}^{\infty}f_nx^n$, $x\in\mathbb{R}$, follows from the previous one by linearity.
\end{proof}

\begin{proof}[Proof of Theorem~\ref{Tform}]
	(E) Property \eqref{E0} is the well-known formula \eqref{wilc0} and it coincides with our definition of derivative in \eqref{dfG2}. Indeed, recalling \eqref{expcom} and \eqref{funcm}, we can write
	\begin{equation*}
		\frac{de^{\mathbf{G}}}{d\mathbf{G}}=\int_0^1e^{(1-t)\ml_{\mathbf{G}}+t\mr_{\mathbf{G}}}dt=\int_0^1e^{(1-t)\ml_{\mathbf{G}}}e^{t\mr_{\mathbf{G}}}dt=\int_0^1\ml_{e^{(1-t)\mathbf{G}}}\mr_{e^{t\mathbf{G}}}dt.
	\end{equation*}
	In order to show \eqref{E2}, for instance, Theorem~\ref{Tcons}, the identity
	\begin{equation*}
		\frac{e^x-e^y}{x-y}=e^y\frac{e^{x-y}-1}{x-y},\quad x\neq y\in\mathbb{R},
	\end{equation*}
	and Theorem~\ref{Lrep}~(iii), (ix) allow one to write
	\begin{align*}
		\frac{de^{\mathbf{G}}}{d\mathbf{G}}&=\frac{e^{\ml_{\mathbf{G}}}-e^{\mr_{\mathbf{G}}}}{\ml_{\mathbf{G}}-\mr_{\mathbf{G}}}=e^{\mr_{\mathbf{G}}}\frac{e^{\ml_{\mathbf{G}}-\mr_{\mathbf{G}}}-1}{\ml_{\mathbf{G}}-\mr_{\mathbf{G}}}=\mr_{e^{\mathbf{G}}}\eta(2\ad_{\mathbf{G}}).
	\end{align*}
	The other variants \eqref{E1} or \eqref{E4} can obviously be obtained by a~slight modification of the above argument. For the sake of completeness, let us also deduce them from \eqref{E2} using \eqref{al1}:
	\begin{align*}
		\frac{de^{\mathbf{G}}}{d\mathbf{G}}=\ml_{e^{\mathbf{G}}}\ml_{e^{-\mathbf{G}}}\mr_{e^{\mathbf{G}}}\eta(2\ad_{\mathbf{G}})=\ml_{e^{\mathbf{G}}}e^{-2\ad_{\mathbf{G}}}\frac{e^{2\ad_{\mathbf{G}}}-1}{2\ad_{\mathbf{G}}}=\ml_{e^{\mathbf{G}}}\eta(-2\ad_{\mathbf{G}})
	\end{align*}
	and
	\begin{equation*}
		\frac{de^{\mathbf{G}}}{d\mathbf{G}}=\ml_{e^{\frac{\mathbf{G}}2}}\mr_{e^{\frac{\mathbf{G}}2}}\ml_{e^{-\frac{\mathbf{G}}2}}\mr_{e^{\frac{\mathbf{G}}2}}\eta(2\ad_{\mathbf{G}})=e^{\ac_{\mathbf{G}}}e^{-\ad_{\mathbf{G}}}\frac{e^{2\ad_{\mathbf{G}}}-1}{2\ad_{\mathbf{G}}}=e^{\ac_{\mathbf{G}}}\frac{\sinh\ad_{\mathbf{G}}}{\ad_{\mathbf{G}}}.
	\end{equation*}
	Variant \eqref{E3} is clearly just a~consequence of \eqref{E4} and \eqref{al2}.
	
	(L) For the derivative of the matrix logarithm, we can proceed completely analogously.	Recalling now also property \eqref{al6}, we get
	\begin{align*}
		\frac{d\log\mathbf{A}}{d\mathbf{A}}=\frac{\log\ml_{\mathbf{A}}-\log\mr_{\mathbf{A}}}{\ml_{\mathbf{A}}-\mr_{\mathbf{A}}}=\frac{\ml_{\log\mathbf{A}}-\mr_{\log\mathbf{A}}}{e^{\ml_{\log\mathbf{A}}}-e^{\mr_{\log\mathbf{A}}}}&=e^{-\mr_{\log\mathbf{A}}}\frac{\ml_{\log\mathbf{A}}-\mr_{\log\mathbf{A}}}{e^{\ml_{\log\mathbf{A}}-\mr_{\log\mathbf{A}}}-1}\\
        &=\mr_{\mathbf{A}^{-1}}(1/\eta)(2\ad_{\log\mathbf{A}}),
	\end{align*}
	which is \eqref{L2}. The other versions \eqref{L1}, \eqref{L3} and \eqref{L4} can be obtained from \eqref{L2} and Lemma~\ref{Had} in a~similar way as for the derivative of the exponential. The inversion relation \eqref{L0} now follows by applying Theorem~\ref{Lrep}~(ii), (iv) to \eqref{E1} (for instance) and using \eqref{funcm} and \eqref{L1}:
	\begin{equation*}
		\Big(\frac{de^{\mathbf{G}}}{d\mathbf{G}}\Big)^{-1}=\frac1{\mr_{e^{\mathbf{G}}}}\frac1{\eta(2\ad_{\mathbf{G}})}=\mr_{e^{-\mathbf{G}}}\frac1{\eta(2\ad_{\mathbf{G}})}=\Big(\frac{d\log\mathbf{A}}{d\mathbf{A}}\Big)_{\mathbf{A}=e^{\mathbf{G}}}.
	\end{equation*}
	In order to verify \eqref{L6}, which is the last remaining identity in (L), let us first calculate that
	\begin{equation*}
		\frac{d}{ds}\frac{\log(1-s+se^x)-\log(1-s+se^y)}{e^x-e^y}=\frac1{(1-s+se^x)(1-s+se^y)},\quad x\neq y\in\mathbb{R},
	\end{equation*}
	hence, by integration of this identity over $(0,1)$, we get
	\begin{equation*}
		\int_0^1\frac{ds}{(1-s+se^x)(1-s+se^y)}=\frac{e^{-y}}{\eta(x-y)},\qquad x,y\in\mathbb{R}.
	\end{equation*}
	With this in hand, it is straightforward to show using \eqref{funcm} and Definition~\ref{Def} that
	\begin{align*}
		\int_0^1\ml_{((1-s)\mathbf{I}+s\mathbf{A})^{-1}}\mr_{((1-s)\mathbf{I}+s\mathbf{A})^{-1}}ds&=\int_0^1\frac{ds}{(1-s+se^{\ml_{\log\mathbf{A}}})(1-s+se^{\mr_{\log\mathbf{A}}})}\\
		&=e^{-\mr_{\log\mathbf{A}}}(1/\eta)(2\ad_{\log\mathbf{A}})=\frac{d\log\mathbf{A}}{d\mathbf{A}},
	\end{align*}
	thanks to \eqref{L2}.
	
	(P) The first identity \eqref{PP0} is again a~direct consequence of definition in \eqref{dfG2}. The second one \eqref{PP00} was already proved in \eqref{wh}. To show \eqref{PP1}, we use the chain rule and previous results (namely \eqref{E4} and \eqref{L4}) to get
	\begin{align}
		\frac{d\mathbf{A}^r}{d\mathbf{A}}=r\Big(\frac{de^{\mathbf{G}}}{d\mathbf{G}}\Big)_{\mathbf{G}=r\log\mathbf{A}}\frac{d\log\mathbf{A}}{d\mathbf{A}}&=e^{\ac_{r\log\mathbf{A}}}\frac{\sinh(r\ad_{\log\mathbf{A}})}{\ad_{\log\mathbf{A}}}e^{-\ac_{\log\mathbf{A}}}\frac{\ad_{\log\mathbf{A}}}{\sinh\ad_{\log\mathbf{A}}}\nonumber\\\
        &=e^{(r-1)\ac_{\log\mathbf{A}}}\frac{\sinh(r\ad_{\log\mathbf{A}})}{\sinh\ad_{\log\mathbf{A}}},\label{comp}
	\end{align}
	which is \eqref{PP2}. In order to get also \eqref{PP1}, we continue in \eqref{comp} and express the first term using \eqref{al2}. This way, applying also the basic identity
	\begin{align*}
		\frac{\sinh(rx)}{\sinh x\cosh((r-1)x)}&=\frac{\sinh x\cosh((r-1)x)+\cosh x\sinh((r-1)x)}{\sinh x\cosh((r-1)x)}\\
        &=1+\frac{\tanh((r-1)x)}{\tanh{x}},
	\end{align*}
	we arrive at
	\begin{equation*}
		\frac{d\mathbf{A}^r}{d\mathbf{A}}
		=\ac_{\mathbf{A}^{r-1}}\frac{\sinh(r\ad_{\log\mathbf{A}})}{\sinh\ad_{\log\mathbf{A}}\cosh((r-1)\ad\log\mathbf{A})}=\ac_{\mathbf{A}^{r-1}}\Big(1+\frac{\tanh{((r-1)\ad_{\log\mathbf{A}})}}{\tanh\ad_{\log\mathbf{A}}}\Big).
	\end{equation*}
	
	(H) All these identities follow immediately from \eqref{E3} and \eqref{al2}.
	
	(G) These identities can be seen as equivalent variants of those in (H), appealing to
	\begin{equation*}
		\cosh x=\cos(ix),\quad\sinh x=-i\sin(ix)
	\end{equation*}
	and the fact that all these functions are entire.
\end{proof}


\begin{thebibliography}{30}
	
	\bibitem{Mohy2024}
	{\sc A.~H. Al-Mohy}, {\em\href{http://dx.doi.org/10.48550/ARXIV.2410.03575}{Generalizing the {F}réchet Derivative Algorithm for the Matrix Exponential}}, 2024.
	
	\bibitem{Alrashdi2024}
	{\sc M.~A.~H. Alrashdi and G.~G. Giusteri}, {\em\href{http://dx.doi.org/10.1063/5.0224019}{Evolution of local relaxed states and the modeling of viscoelastic fluids}}, Physics of Fluids, 36 (2024).

    \bibitem{Aubram}
	{\sc D.~Aubram}, {\em\href{http://dx.doi.org/10.1177/10812865241288526}{Notes on rate equations in nonlinear continuum mechanics}}, Math. Mech. Solids {\bf 30} (2025), no.~7, pp.~1527--1589.
	
	\bibitem{BathoryOdh}
	{\sc M.~Bathory}, {\em\href{http://dx.doi.org/10.7153/mia-2022-25-48}{Sharp nonlinear estimates for multiplying derivatives of positive definite tensor fields}}, Mathematical Inequalities {\&} Applications, 25 (2022), pp.~751--769.
	
	\bibitem{Bathory_2020}
	{\sc M.~Bathory, M.~Bul\'{\i}\v{c}ek, and J.~M\'{a}lek}, {\em\href{http://dx.doi.org/10.1515/anona-2020-0144}{Large data existence theory for three-dimensional unsteady flows of rate-type viscoelastic fluids with stress diffusion}}, Advances in Nonlinear Analysis, 10 (2020), pp.~501--521.
	
	\bibitem{lograte2025}
	{\sc M.~Bathory, M.~Bul\'{\i}\v{c}ek, J.~M\'{a}lek, and V.~Pr\r{u}\v{s}a}, {\em\href{http://dx.doi.org/10.48550/ARXIV.2504.15692}{A new representation formula for the logarithmic corotational derivative -- a case study in application of commutator based functional calculus}}, (2025).
	
	\bibitem{Bhatia_1997}
	{\sc R.~Bhatia}, {\em\href{http://doi.org/10.1007/978-1-4612-0653-8}{Matrix Analysis}}, Springer New York, 1997.
	
	\bibitem{Bhatia_2009}
	{\sc R.~Bhatia and M.~Uchiyama}, {\em\href{http://dx.doi.org/10.1016/j.exmath.2009.02.001}{The operator equation $\sum_{i=0}^nA^{n-i}XB^i=Y$}}, Expositiones Mathematicae, 27 (2009), pp.~251--255.
	
	\bibitem{Birman_2003}
	{\sc M.~S. Birman and M.~Solomyak}, {\em\href{http://dx.doi.org/10.1007/s00020-003-1157-8}{Double Operator Integrals in a Hilbert Space}}, Integral Equations and Operator Theory, 47 (2003), pp.~131--168.
	
	\bibitem{Giusteri_2024}
	{\sc G.~Ciampa, G.~G. Giusteri, and A.~G. Soggiu}, {\em\href{http://dx.doi.org/10.1002/mma.10469}{Viscoelasticity, logarithmic stresses, and tensorial transport equations}}, Mathematical Methods in the Applied Sciences, 48 (2024), pp.~2934--2953.
	
	\bibitem{Krein1956}
	{\sc Y.~L. Dalecki\u{\i} and S.~G. Kre\u{\i}n}, {\em Integration and differentiation of functions of {H}ermitian operators and applications to the theory of perturbations}, Voron\v{e}\v{z}. Gos. Univ. Trudy Sem. Funkcional. Anal., 1956 (1956), pp.~81--105.
	
	\bibitem{Farforovskaya_1998}
	{\sc Y.~B. Farforovskaya}, {\em\href{http://dx.doi.org/10.1007/bf02440150}{Double operator integrals and their estimates in the uniform norm}}, Journal of Mathematical Sciences, 92 (1998), pp.~3640--3656.

    \bibitem{Fiala_2019}
	{\sc Z.~Fiala}, {\em\href{http://dx.doi.org/10.1007/s00033-019-1227-7}{Objective time derivatives revised}}, Zeitschrift für angewandte Mathematik und Physik, 71 (2019).
	
	\bibitem{Hall_2015}
	{\sc B.~C. Hall}, {\em\href{http://doi.org/10.1007/978-3-319-13467-3}{Lie Groups, Lie Algebras, and Representations: An Elementary Introduction}}, vol.~222 of Graduate Texts in Mathematics, Springer, Cham, second~ed., 2015. \newblock An elementary introduction.
	
	\bibitem{Israel_1973}
	{\sc J.~Hawkins and A.~Ben-Israel}, {\em\href{http://dx.doi.org/10.1080/03081087308817015}{On generalized matrix functions}}, Linear and Multilinear Algebra, 1 (1973), pp.~163--171.

    \bibitem{Higham_2008}
	{\sc N.~J. Higham}, {\em\href{http://doi.org/10.1137/1.9780898717778}{Functions of Matrices: Theory and Computation}},
	Society for Industrial and Applied Mathematics, Jan. 2008.
	
	\bibitem{Mohy_2010}
	{\sc N.~J. Higham and A.~H. Al-Mohy}, {\em\href{http://dx.doi.org/10.1017/s0962492910000036}{Computing matrix functions}}, Acta Numerica, 19 (2010), pp.~159--208.
	
	\bibitem{HornHong1985}
	{\sc Y.~P. Hong and R.~A. Horn}, {\em\href{http://dx.doi.org/10.1080/03081088508817659}{On simultaneous reduction of families of matrices to triangular or diagonal form by unitary congruences}}, Linear and Multilinear Algebra, 17 (1985), pp.~271--288.
	
	\bibitem{HornJohnson1987}
	{\sc R.~A. Horn and C.~R. Johnson}, {\em\href{http://dx.doi.org/10.1080/03081088708817746}{Hadamard and conventional submultiplicativity for unitarily invariant norms on matrices}}, Linear and Multilinear Algebra, 20 (1987), pp.~91--106.

    \bibitem{Horn_1991}
    {\sc R.~A. Horn and C.~R. Johnson}, {\em\href{http://doi.org/10.1017/CBO9780511840371}{Topics in matrix analysis}}, Cambridge Univ. Press, Cambridge, 1991.
	
	\bibitem{Martin_2015}
	{\sc R.~J. Martin and P.~Neff}, {\em\href{http://dx.doi.org/10.1007/s00419-015-1017-4}{Some remarks on the monotonicity of primary matrix functions on the set of symmetric matrices}}, Archive of Applied Mechanics, 85 (2015), pp.~1761--1778.
	
	\bibitem{Masmoudi2011}
	{\sc N.~Masmoudi}, {\em\href{http://dx.doi.org/10.1016/j.matpur.2011.04.008}{Global existence of weak solutions to macroscopic models of polymeric flows}}, Journal de Math{\'{e}}matiques Pures et Appliqu{\'{e}}es, 96 (2011), pp.~502--520.
	
	\bibitem{Meng2022}
	{\sc C.~Meng and M.~X.~Chen}, {\em\href{http://doi.org/10.1007/s00707-022-03161-2}{The general basis-free spin and its concise proof}}, Acta Mech. {\bf 233} (2022), no.~4, p.p~1307--1316.
	
	\bibitem{Moler_2003}
	{\sc C.~Moler and C.~Van~Loan}, {\em\href{http://dx.doi.org/10.1137/s00361445024180}{Nineteen Dubious Ways to Compute the Exponential of a Matrix, Twenty-Five Years Later}}, SIAM Review, 45 (2003), pp.~3--49.
	
	\bibitem{Najfeld_1995}
	{\sc I.~Najfeld and T.~Havel}, {\em\href{http://dx.doi.org/10.1006/aama.1995.1017}{Derivatives of the Matrix Exponential and Their Computation}}, Advances in Applied Mathematics, 16 (1995), pp.~321--375.
	
	\bibitem{Neff2024}
	{\sc P.~Neff, S.~Holthausen, M.~V.~d’Agostino, D.~Bernardini, A.~Sky, I.~D.~Ghiba, and R.~J. Martin}, {\em\href{http://dx.doi.org/10.1016/j.jmps.2025.106074}{Hypo-elasticity, Cauchy-elasticity, corotational stability and monotonicity in the logarithmic strain}}, Journal of the Mechanics and Physics of Solids,  (2025), p.~106074.
	
	\bibitem{Neff_2025}
	{\sc P.~Neff, S.~Holthausen, S.~N. Korobeynikov, I.-D. Ghiba, and R.~J.~Martin}, {\em\href{http://dx.doi.org/10.1007/s00707-025-04249-1}{A natural requirement for objective corotational rates—on structure-preserving corotational rates}}, Acta Mechanica, 236 (2025), pp.~2657--2689.
	
	\bibitem{Noferini_2017}
	{\sc V.~Noferini}, {\em\href{http://dx.doi.org/10.1137/16m1072851}{A Formula for the {F}réchet Derivative of a Generalized Matrix Function}}, SIAM Journal on Matrix Analysis and Applications, 38 (2017), pp.~434--457.
	
	\bibitem{Norris2008}
	{\sc A.~Norris}, {\em\href{http://dx.doi.org/10.2140/jomms.2008.3.243}{Eulerian conjugate stress and strain}}, Journal of Mechanics of Materials and Structures, 3 (2008), pp.~243--260.
	
	\bibitem{Oldroyd1950}
	{\sc J.~G. Oldroyd}, {\em\href{http://dx.doi.org/10.1098/rspa.1950.0035}{On the formulation of rheological equations of state}}, Proc. Roy. Soc. London Ser. A, 200 (1950), pp.~523--541.
	
	\bibitem{Owen_1965}
	{\sc D.~B. Owen, M.~Abramowitz, and I.~A. Stegun}, {\em\href{http://dx.doi.org/10.2307/1266136}{Handbook of Mathematical Functions with Formulas, Graphs, and Mathematical Tables}}, Technometrics, 7 (1965), p.~78.
	
	\bibitem{Powers_1973}
	{\sc D.~L. Powers}, {\em\href{http://dx.doi.org/10.4153/cjm-1973-056-1}{The Frechet Differential of a Primary Matrix Function}}, Canadian Journal of Mathematics, 25 (1973), pp.~554--559.
	
	\bibitem{Vejvoda_2025}
	{\sc V.~Pr\r{u}\v{s}a, K.~R. Rajagopal, C.~Rodriguez, L.~Trnka, and M.~Vejvoda}, {\em\href{https://doi.org/10.1016/j.euromechsol.2025.105879}{Modeling metamaterials by second-order rate-type constitutive relations between only the macroscopic stress and strain}}, European Journal of Mechanics - A/Solids, 2025.
	
	\bibitem{Rinehart_1957}
	{\sc R.~F. Rinehart}, {\em\href{http://dx.doi.org/10.1090/s0002-9939-1957-0084564-0}{Extension of the derivative concept for functions of matrices}}, Proceedings of the American Mathematical Society, 8 (1957), pp.~329--335.
	
	\bibitem{Rossmann2002}
	{\sc W.~Rossmann}, {\em Lie groups}, vol.~5 of Oxford Graduate Texts in Mathematics, Oxford University Press, Oxford, 2002. \newblock An introduction through linear groups.
	
	\bibitem{Wilcox1967}
	{\sc R.~M. Wilcox}, {\em\href{http://dx.doi.org/10.1063/1.1705306}{Exponential operators and parameter differentiation in quantum physics}}, Journal of Mathematical Physics, 8 (1967), pp.~962--982.
	
	\bibitem{Wouk_1965}
	{\sc A.~Wouk}, {\em\href{http://dx.doi.org/10.1016/0022-247x(65)90073-9}{Integral representation of the logarithm of matrices and operators}}, Journal of Mathematical Analysis and Applications, 11 (1965), pp.~131--138.
	
	\bibitem{Xiao1997}
	{\sc H.~Xiao, O.~T. Bruhns, and A.~Meyers}, {\em\href{http://dx.doi.org/10.1007/BF01213020}{Logarithmic strain, logarithmic spin and logarithmic rate}}, Acta Mechanica, 124 (1997), pp.~89--105.
	
	\bibitem{Xiao_1998}
	{\sc H.~Xiao, O.~T. Bruhns, and A.~Meyers}, {\em\href{http://dx.doi.org/10.1016/s0020-7683(97)00267-9}{On objective corotational rates and their defining spin tensors}}, Internat.~J.~Solids Structures, 35 (1998), pp.~4001--4014.
	
\end{thebibliography}
\end{document}